\numberwithin{equation}{section}
\newtheorem{theorem}{Theorem}[section]
\newtheorem{lemma}[theorem]{Lemma}
\newtheorem{proposition}[theorem]{Proposition}
\newtheorem{definition}[theorem]{Definition}
\newtheorem{corollary}[theorem]{Corollary}
\theoremstyle{definition}
\newtheorem{example}[theorem]{Example}
\newtheorem{remark}[theorem]{Remark}
\newcommand{\be}{\begin{equation}}
\newcommand{\ee}{\end{equation}}
\newcommand{\bes}{\begin{equation*}}
\newcommand{\ees}{\end{equation*}}
\newcommand{\cC}{\mathcal{C}}
\newcommand{\cK}{\mathcal{K}}
\newcommand{\cF}{\mathcal{F}}
\newcommand{\cA}{\mathcal{A}}
\newcommand{\cB}{\mathcal{B}}
\newcommand{\cO}{\mathcal{O}}
\newcommand{\cS}{\mathcal{S}}
\newcommand{\cT}{\mathcal{T}}
\newcommand{\cZ}{\mathcal{Z}}
\newcommand{\lel}{\left\langle}
\newcommand{\rir}{\right\rangle}
\newcommand{\mb}[1]{\mathbb{#1}}
\newcommand{\Bd}{\mathbb{B}_d}
\newcommand{\pBd}{\partial \mathbb{B}_d}
\newcommand{\pV}{\partial V}
\newcommand{\oV}{\overline{V}}
\newcommand{\rank}{\operatorname{rank}}
\newcommand{\alg}{\operatorname{Alg}}
\newcommand{\Spec}{\operatorname{Spec}}
\newcommand{\spn}{\operatorname{span}}
\newcommand{\lip}{\langle}
\newcommand{\rip}{\rangle}
\begin{document}

\title{Essential normality, essential norms and hyperrigidity}

\author{Matthew Kennedy} 
\address{School of Mathematics and Statistics \\
Carleton University \\
1125 Colonel By Drive \\
Ottawa, Ontario K1S 5B6 \\
Canada}
\email{mkennedy@math.carleton.ca}

\author[Orr Shalit]{Orr Moshe Shalit} 
\address{Department of Mathematics\\
Faculty of natural Sciences\\
Ben-Gurion University of the Negev\\
Be'er Sheva\; 84105\\
Israel}
\email{oshalit@math.bgu.ac.il}

\thanks{First author supported by a grant from the Natural Sciences and Engineering Research Council of Canada. The second author is partially supported by ISF Grant no. 474/12, by 
EU FP7/2007-2013 Grant no. 321749, and by GIF Grant no. 2297-2282.6/20.1. }

\begin{abstract}
Let $S = (S_1, \ldots, S_d)$ denote the compression of the $d$-shift to the complement of a homogeneous ideal $I$ of $\mb{C}[z_1, \ldots, z_d]$. Arveson conjectured that $S$ is essentially normal. In this paper, we establish new results supporting this conjecture, and connect the notion of essential normality to the theory of the C*-envelope and the noncommutative Choquet boundary. 

The unital norm closed algebra $\cB_I$ generated by $S_1,\ldots,S_d$ modulo the compact operators is shown to be completely isometrically isomorphic to the uniform algebra generated by polynomials on $\overline{V} := \overline{\cZ(I) \cap \Bd}$, where $\cZ(I)$ is the variety corresponding to $I$. Consequently, the essential norm of an element in $\cB_I$ is equal to the sup norm of its Gelfand transform, and the C*-envelope of 
$\cB_I$ is identified as the algebra of continuous functions on $\overline{V} \cap \pBd$, which means it is a complete invariant of the topology of the variety determined by $I$ in the ball.

Motivated by this determination of the C*-envelope of $\cB_I$, we suggest a new, more qualitative approach to the problem of essential normality. We prove the tuple $S$ is essentially normal if and only if it is hyperrigid as the generating set of a C*-algebra, which is a property closely connected to Arveson's notion of a boundary representation.

We show that most of our results hold in a much more general setting. In particular, for most of our results, the ideal $I$ can be replaced by an arbitrary (not necessarily homogeneous) invariant subspace of the $d$-shift.
\end{abstract}

\subjclass[2010]{47A13, 47L30, 46E22}
\keywords{Essentially normal operators, Hilbert modules, Non-selfadjoint operator algebras, C*-envelope, Drury-Arveson space, the $d$-shift, von Neumann inequality}

\maketitle

\section{Introduction, notation and preliminaries} 

The purpose of this paper is to collect evidence supporting Arveson's conjecture on essential normality, and to connect the conjecture with the theory of the C*-envelope and the noncommutative Choquet boundary. Our results are of a nature quite different from other results on this conjecture, e.g., \cite{Arv05,Arv07,Dou06a,Dou06b,DS11,DW12,Esc11,GuoWang,Ken12,KennedyShalit12,Sha11}; these previous results gave a full verification of the conjecture for limited classes of (typically homogeneous) ideals. Here, we shall present more limited results that hold for all homogeneous ideals, and for a large class of non-homogeneous ideals. Arveson's conjecture has several interesting and non-trivial consequences. We shall prove some of these consequences directly, thereby gathering evidence supporting the conjecture. 

This work also connects to the ongoing effort to understand operator algebras arising from {\em subproduct systems} (see \cite{DRS,Dor-On,Gurevich,Har12,ShalitSolel,ViselterCovariant, ViselterCuntz}). If we restrict attention to homogeneous ideals, then the algebras studied in this paper are precisely the algebras arising from commutative subproduct systems over $\mb{N}$, with finite dimensional Hilbert spaces as fibres. 

\subsection{Preliminaries}
Throughout, $d\geq 2$ is a fixed integer, $\mb{B}_d$ denotes the unit ball in $\mb{C}^d$, and $\mb{C}[z] = \mb{C}[z_1, \ldots, z_d]$ denotes the algebra of complex polynomials in $d$ variables. Let $E$ be a Hilbert space with orthonormal basis $\{e_1, \ldots, e_d\}$. Then we may identify the symmetric tensor algebra over $E$ with $\mb{C}[z]$. 
Form the symmetric Fock space over $E$:
\[
\cF^+(E) = \mb{C} \oplus E \oplus E^2 \oplus \ldots 
\]
The space $\cF^+(E)$ is also called the Drury--Arveson space. It can be naturally identified as the reproducing kernel Hilbert space on the unit ball with reproducing kernel
\[
k_w(z) = \frac{1}{1 - \lel z, w \rir}, \quad w,z \in \Bd. 
\]
In this function-theoretic incarnation the Drury-Arveson space is usually denoted by $H^2_d$, and we shall use this notation here. A third, equivalent, way of viewing this space is simply as the completion of $\mb{C}[z]$ under the inner product that makes monomials orthogonal and assigns to each monomial the norm 
\[
\|z^{\alpha}\|^2 = \frac{\alpha_1! \cdots \alpha_d!}{(\alpha_1 + \ldots +\alpha_d)!} ,
\]
for $\alpha = (\alpha_1, \ldots, \alpha_d) \in \mb{N}^d$ and $z^\alpha = z_1^{\alpha_1} \cdots z_d^{\alpha_d}.$

There is a natural row contraction acting on $H^2_d$, namely the $d$-shift $M_z = (M_{z_1}, \ldots, M_{z_d})$, where for $i=1,\ldots,d$, $M_{z_i}$ denotes the operator of multiplication by $z_i$ (here and below, by \emph{row contraction} we mean a tuple of operators $T = (T_1, \ldots, T_d)$ satisfying $\sum T_i T_i^* \leq I$). The $d$-shift enjoys some universal properties which make it particularly worthy of  investigation (see the recent survey \cite{ShalitSurvey}). 

Let $I \triangleleft \mb{C}[z]$ be an ideal. 
In the literature, most attention has been paid to the case where $I$ is a homogeneous ideal, but we shall require less. 
Denote by $\cZ(I)$ the zero set of the ideal $I$, let $\overline{V}(I) := \overline{\cZ(I) \cap \mb{B}_d}$ and 
\bes
\pV(I) = \overline{V}(I) \cap \partial \mb{B}_d .
\ees
Below, when $I$ will be fixed, we will write simply $\pV = \pV(I)$ and $\overline{V} = \overline{V}(I)$. We make the following standing assumptions on the ideal $I$. 
\be\label{eq:stand_assum}
\overline{V}(I) := \overline{\cZ(I) \cap \mb{B}_d} = \cZ(I) \cap \overline{\mb{B}_d},
\ee
and
\be\label{eq:stand_assum2}
\pV \neq \emptyset . 
\ee
The above assumptions imply, in particular, $I$ is not of finite co-dimension in $\mb{C}[z]$. 
The assumptions (\ref{eq:stand_assum}, \ref{eq:stand_assum2}) are clearly satisfied by every homogeneous (or more generally, every quasi-homogeneous) ideal of infinite co-dimension. 

Form the space
\be\label{eq:Fock}
\cF_I = H^2_d \ominus I .
\ee
The operator $d$-tuple of interest in this paper is the compression $S = (S_1,\ldots,S_d)$ of the $d$-shift to $\cF_I$. That is, the operators $S_1, \ldots, S_d$ are defined by 
\bes
S_i = P_{\cF_I} M_{z_i} \big|_{\cF_{I}} \,\, , \,\, i=1, \ldots, d.
\ees
Note that for every $p \in I$, we have that $p(S) = 0$. 
The $d$-tuple $S$ depends on $I$, but since the choice of $I$ will always be clear from the context, our choice of notation will not reflect this.

Circa 2000, Arveson conjectured that for $1 \leq i,j \leq d$ the commutator $S_i S^*_j - S^*_j S_i$ is compact, or in other words, that $S$ is essentially normal. 
(The up-to-date version of the conjecture, due to Arveson \cite{Arv02} and refined by Douglas \cite{Dou06b}, is that $S$ is $p$-essentially normal, meaning that $|S_i S^*_j - S^*_j S_i|^p$ is trace class for all $p > \dim \cZ(I)$.) 
Before presenting our results, it will be convenient to introduce a few more pieces of notation.

We define $\cA_I^o$ to be the unital algebra generated by $S_1, \ldots, S_d$, and we define $\cA_I$ to be the norm closure of $\cA^o_I$. The {\em Toeplitz algebra of $I$}, which we denote by $\cT_I$, is defined to be the C*-algebra generated by $\cA_I$. Let $\cK$ denote the compact operators on $\cF_I$. 
It is known (see \cite[Theorem 1.3]{Popescu06} or \cite[Proposition 8.1]{ShalitSolel}) that $\cK \subseteq \cT_I$; thus we may define the {\em Cuntz algebra  of $I$}, which we denote by $\cO_I$, to be the quotient $\cO_I = \cT_I / \cK$.

For the special case where $I = \{0\}$, we write $\cA_d$ for the unital norm closed algebra generated by $S = M_z$. This algebra is the universal operator algebra generated by a commuting row contraction. 
When $I$ is homogeneous, $\cA_I$ is the universal operator algebra generated by a $d$-contraction subject to the relations in $I$ (see \cite[Theorem 8.4]{ShalitSolel}). In this case, we have a natural completely isometric isomorphism
\bes
\cA_I \cong \cA_d / \overline{I}.
\ees

The spectrum of $\cA_I$, that is, the space of characters (non-zero multiplicative linear functionals) on $\cA_I$, may be homeomorphically identified with $\overline{V}$, via the identification of a point with the evaluation functional at that point:
\be\label{eq:spec1}
\Spec(\cA_I) \ni \rho \longleftrightarrow (\rho(S_1), \ldots, \rho(S_d)) \in \overline{V},
\ee
\be\label{eq:spec2}
\overline{V} \ni \lambda = (\lambda_1, \ldots, \lambda_d) \longleftrightarrow \rho_\lambda : S_i \mapsto \lambda_i .
\ee
For the case when $I$ is homogeneous, this was explained in \cite[Section 10.2]{ShalitSolel} and \cite[Section 4.1]{DRS} in detail. 
In general, this fact follows from \cite[Theorem 2.1]{Popescu06}. By that theorem, every point in $\cZ(I) \cap \mb{B}_d$ gives rise to a character, as it is a pure row contraction satisfying the relations in $I$. 
Since the spectrum is closed, we find that every point in $\overline{V}$ also gives rise to a character (it is here that we use the standing assumption (\ref{eq:stand_assum})). 
Conversely, since $p(S) = 0$ for every $p \in I$, and since a multiplicative linear functional on an operator algebra is automatically completely contractive, we conclude that for every character $\rho$, the point $ (\rho(S_1), \ldots, \rho(S_d))$ lies in $\overline{V}$. 

\subsection{Overview of the paper}
The simplest form of Arveson's conjecture can be restated as the assertion that $\cO_I = C(\pV)$. We will obtain some results regarding the structure of $\cO_I$, and the operator algebraic structure of the image of $\cA_I$ in the quotient $\cO_I = \cT_I / \cK$. In Section \ref{sec:C*spectra} we prove that the space of $1$-dimensional representations is homeomorphic to $\pV$, and that the same is true for $\cT_I$. Thus, $\pV$ is an invariant of the operator algebraic structure of $\cO_I$, as expected. 

Arveson's conjecture implies there is a C*-algebra that completely captures the topology of $\pV$, namely $\cO_I$. The existence of a canonical and naturally occurring C*-algebra that encodes the topology of $\pV$ is also suggested by the results of \cite{DRS,Har12}, which showed that, if $I$ is radical and homogeneous, then certain operator algebraic constructs associated with $S$ reflect corresponding geometric properties of $\cZ(I)$. To be precise, it was shown that if $I$ and $J$ are two radical homogeneous ideals, then 
\begin{enumerate}
\item $\cA_I^o$ is algebraically isomorphic to $\cA_J^o$ if and only if $\cZ(I)$ and $\cZ(J)$ are isomorphic in the sense of algebraic geometry;
\item $\cA_I$ is isometrically isomorphic to $\cA_J$ if and only if $\cZ(I)$ is the image of $\cZ(J)$ under a unitary transformation;
\item $\cA_I$ is algebraically isomorphic to $\cA_J$ if and only if the structure of $\cZ(I)$ and $\cZ(J)$ are the same, in an intermediate geometry, which is finer then the algebraic geometry and coarser than the Hilbert space geometry ($\cZ(I)$ is the image of $\cZ(J)$ under a linear map which is length preserving on $\cZ(J)$). 
\end{enumerate}

With these results in mind, it is natural to ask  for a C*-algebra associated with $S$ that completely captures the topology of $\pV$. 

In Section \ref{sec:main} we show that the sought after C*-algebra is the C*-envelope of the unital operator algebra generated by $S$ modulo the compact operators (for an up-to-date introduction to the C*-envelope see \cite{Kakariadis}). Specifically, let $Z = (Z_1, \ldots, Z_d)$ denote the image of $S$ in $\cO_I$. We prove that $C^*_e(\overline{\alg}(Z)) = C(\pV)$, as anticipated by Arveson's conjecture. To obtain this result, we compute the essential norm of any element in $M_n(\cA_I)$, and show that it is given as the sup norm of its Gelfand transform. Consequently, we obtain that $\overline{\alg}(Z)$ is completely isometrically isomorphic to $A(\oV)$ --- the norm closure of polynomials on $\oV$. Section \ref{sec:main} closes with a discussion of a new approach to the essential normality conjecture involving the  C*-envelope, and gives some examples showing the difficulties involved. 

In Section \ref{sec:hyperrigidity}, we connect the notion of essential normality with the theory of the non-commutative Choquet boundary. A fundamental insight due to Arveson is the special significance of $*$-representations of $\cT_I$ that restrict to maps on $\cA_I$ which are rigid, in the sense that they have a unique unital completely positive extension to $\cT_I$. The irreducible $*$-representations of $\cT_I$ with this property are precisely the boundary representations with respect to $\cA_I$ (see \cite{Arv69}, \cite{Arv07} and \cite{DK13}). In \cite{Arv11}, Arveson studied a phenomenon he calls \emph{hyperrigidity}, which occurs when every $*$-representation has this rigidity property. We prove that the essential normality of the $d$-tuple $S$ is equivalent to the statement that the algebra $\cA_I$ is hyperrigid. 

In Section \ref{sec:other-spaces}, we discuss the fact that many of our results hold when the space $H^2_d$ is replaced with an arbitrary Hilbert module satisfying certain natural conditions. In particular, we prove that many of the results hold in the setting of the Besov-Sobolev spaces $B^2_\sigma(\mb{B}_d)$, for $\sigma \geq 1/2$. 

It is well known \cite{Arv98, DavPittsPick} that von Neumann's inequality fails for row contractions, meaning that it is {\em not true} that for every row contraction $T$ and for every polynomial $p$ one has 
\be\label{eq:vN}
\|p(T)\| \leq \sup_{z \in \pBd} |p(z)| .
\ee
In the final section we show that for finite rank row contractions corresponding to homogeneous ideals, von Neumann's inequality holds modulo the compacts. In fact, we show that if $T$ is a commuting row contraction of {\em finite rank}, and if $q(T) = 0$ for every polynomial $q$ lying in some homogeneous ideal $I \triangleleft \mb{C}[z]$ then 
\[
\|p(T)\|_e \leq \sup_{z \in \partial V(I)}|p(z)|
\]
for all $p \in \mb{C}[z]$ (and similarly for matrix valued polynomials). In the case where $T$ does not satisfy any homogeneous polynomial equation, then by setting $I = (0)$, we obtain (\ref{eq:vN}) with the essential norm replacing the norm (this result also follows from the results in \cite{Arv98}). 

\section{C*-algebraic spectra}\label{sec:C*spectra}

The purpose of this section is to determine the $1$-dimensional representations of $\cT_I$ and $\cO_I$. 
We continue to use the notation introduced in the introduction, and we recall the standing assumptions that  (\ref{eq:stand_assum}, \ref{eq:stand_assum2}), and the fact that $\cT_I$ contains the algebra of compact operators $\cK$.

If $\cC$ is a (not-necessarily commutative) C*-algebra, we denote by $\Spec(\cC)$ the space of multiplicative linear functionals on $\cC$, endowed with the weak* topology. Elements in $\Spec(\cC)$ are called {\em characters}.

\begin{proposition}\label{prop:spectra}
$\Spec(\cT_I) \cong \pV$.
\end{proposition}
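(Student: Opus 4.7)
The plan is to construct mutually inverse continuous maps between the two compact Hausdorff spaces. For the forward direction, I would restrict a character $\rho \in \Spec(\cT_I)$ to $\cA_I$; by \eqref{eq:spec1}--\eqref{eq:spec2}, this restriction is evaluation at $\lambda := (\rho(S_1), \ldots, \rho(S_d)) \in \oV$. Since $\cK$ admits no nonzero one-dimensional representations, $\rho$ vanishes on $\cK$. Using the identity $\sum_i M_{z_i} M_{z_i}^* = I - P_0$ on $H^2_d$ (with $P_0$ the projection onto the constants) together with the co-invariance of $\cF_I$ under each $M_{z_i}$, one computes that $(I - \sum_i S_i S_i^*)f = \ip{f, P_{\cF_I}(1)} P_{\cF_I}(1)$ on $\cF_I$, which is a rank-one (hence compact) operator. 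Applying $\rho$ therefore yields $\sum_i |\lambda_i|^2 = 1$, so $\lambda \in \pBd$ and hence $\lambda \in \pV$. As $\cT_I$ is generated as a C*-algebra by $S_1, \ldots, S_d$, the resulting map $\rho \mapsto \lambda$ is a weak-$*$ continuous injection $\Spec(\cT_I) \hookrightarrow \pV$.

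For the inverse, given $\lambda \in \pV$, I would invoke \eqref{eq:stand_assum} to pick $w_n \in \cZ(I) \cap \Bd$ with $w_n \to \lambda$. Because $w_n \in \cZ(I)$, the normalized reproducing kernel $\hat{k}_{w_n} = k_{w_n}/\|k_{w_n}\|$ lies in $\cF_I$ and satisfies $S_i^* \hat{k}_{w_n} = \bar{w}_{n,i} \hat{k}_{w_n}$; moreover $\hat{k}_{w_n} \to 0$ weakly since $\|k_{w_n}\| \to \infty$. I then pass to a weak-$*$ cluster point $\phi$ of the vector states $\omega_n(X) = \ip{X \hat{k}_{w_n}, \hat{k}_{w_n}}$ on $\cT_I$. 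Then $\phi(S_i) = \lambda_i$, $\phi(S_i^*) = \bar{\lambda}_i$, and $\phi|_{\cK} = 0$.

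The main obstacle will be to show $\phi$ is multiplicative. In the GNS representation $(\pi, \xi)$, Cauchy--Schwarz gives $\phi(S_i S_i^*) \geq |\phi(S_i^*)|^2 = |\lambda_i|^2$, and summing, together with $\phi(\sum_i S_i S_i^*) = 1$, forces equality in each coordinate; this translates to $\pi(S_i^*) \xi = \bar{\lambda}_i \xi$. The companion identity $\pi(S_i) \xi = \lambda_i \xi$ is equivalent to $\phi(S_i^* S_i) = |\lambda_i|^2$, and here I would invoke the asymptotic row-isometry behavior of the $d$-shift on $H^2_d$: the operator $\sum_i M_{z_i}^* M_{z_i} - I$ is diagonal in the monomial basis with eigenvalues $(d-1)/(|\alpha|+1)$, hence compact on $H^2_d$, so that $\sum_i \|M_{z_i} \hat{k}_{w_n}\|^2 \to 1$. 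Combined with the lower bound $\|S_i \hat{k}_{w_n}\|^2 \geq |\ip{S_i \hat{k}_{w_n}, \hat{k}_{w_n}}|^2 = |w_{n,i}|^2$ and the upper bound $\|S_i \hat{k}_{w_n}\|^2 \leq \|M_{z_i} \hat{k}_{w_n}\|^2$ (from $S_i = P_{\cF_I} M_{z_i}|_{\cF_I}$), a squeeze yields $\|S_i \hat{k}_{w_n}\|^2 \to |\lambda_i|^2$, whence $\phi(S_i^* S_i) = |\lambda_i|^2$. Thus $\xi$ is a common eigenvector of $\pi(S_i)$ and $\pi(S_i^*)$ with eigenvalues $\lambda_i$ and $\bar{\lambda}_i$; iteration shows $\pi(W)\xi = \phi(W)\xi$ for every $*$-word $W$ in the generators, so $\phi$ is multiplicative on a norm-dense $*$-subalgebra and hence a character. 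Since a continuous bijection of compact Hausdorff spaces is automatically a homeomorphism, this completes the argument.
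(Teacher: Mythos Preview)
Your argument is correct, and it takes a genuinely different route from the paper's proof. The forward direction is essentially identical (restrict to $\cA_I$, use that characters kill $\cK$, and apply $I-\sum S_iS_i^*\in\cK$ to force $|\lambda|=1$). The divergence is entirely in the construction of a character for each $\lambda\in\pV$.

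The paper proceeds abstractly: it extends $\rho_\lambda$ to a state on $\cT_I$ via Arveson's extension theorem, takes a minimal Stinespring dilation $\sigma$, uses Sarason's lemma to triangularize $\sigma(S_i)$ with respect to $H_1\oplus L\oplus H_2$, and then exploits only the inequality $\sum S_iS_i^*\le I$ together with $|\lambda|=1$ to kill the off-diagonal entries. After rotating so that $\lambda=(1,0,\ldots,0)$, a careful invariance argument shows $y_1=I_{H_0}$, forcing the dilation to be trivial. This argument uses nothing about $H^2_d$ beyond the row-contractivity of $S$ and the compactness of $I-\sum S_iS_i^*$, which is precisely why it transfers verbatim to the abstract setting of Section~\ref{sec:other-spaces}.

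Your approach is more concrete and arguably more elementary: you manufacture the character directly as a weak-$*$ cluster point of the vector states at normalized kernels $\hat{k}_{w_n}$, and then verify multiplicativity via equality in Cauchy--Schwarz in the GNS picture. The step $\pi(S_i^*)\xi=\bar\lambda_i\xi$ comes for free from $\sum S_iS_i^*\equiv I\pmod\cK$, exactly as in the paper. The additional ingredient you need for $\pi(S_i)\xi=\lambda_i\xi$ is the compactness of $\sum_i M_{z_i}^*M_{z_i}-I$ on the ambient space $H^2_d$, which lets you squeeze $\|S_i\hat{k}_{w_n}\|^2\to|\lambda_i|^2$. This is a true fact (the diagonal eigenvalue computation you indicate is correct), and the squeeze is clean since the individual defects $\|S_i\hat{k}_{w_n}\|^2-|w_{n,i}|^2$ are nonnegative and their sum tends to zero. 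The trade-off is that your proof uses this extra piece of Drury--Arveson structure, so it does not immediately port to the general framework of Section~\ref{sec:other-spaces} without also assuming something like condition~(4) there (or at least compactness of $I-\sum T_i^*T_i$ at the ambient level). On the other hand, you avoid Arveson extension, Stinespring, and the somewhat delicate invariance argument for $y_1$.
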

\begin{proof}
Let $\phi \in \Spec(\cT_I)$. Then by (\ref{eq:spec1}) and (\ref{eq:spec2}) we have that  $\phi \big|_{\cA_I}$ is an evaluation functional $\rho_\lambda$ for some $\lambda \in \overline{V}$. This gives rise to a continuous mapping of $\Spec(\cT_I)$ into $\overline{V}$. It is clear that this mapping is injective, as $S_1, \ldots, S_d$ generate $\cT_I$. 

To see that this mapping is into $\partial V$, let $\phi \big|_{\cA_I} = \rho_\lambda$. Since $I_{\cF_I} - \sum S_i S_i^* = P_\mb{C} \in \cK$ (where $\mb{C}$ is interpreted as the first summand in (\ref{eq:Fock})), and since every character annihilates the compacts, we get 
\bes
\|\lambda \| = \sum |\lambda_i|^2 = 1. 
\ees

It remains to show that every evaluation functional $\rho_\lambda$ on $\cA_I$, with $\lambda \in \pV$, extends to a character of $\cT_I$. Fix $\lambda \in \pV$. By Arveson's Extension Theorem, the contractive map $\rho_\lambda : \cA_I \rightarrow \mb{C}$ extends to a state $\hat{\rho}_\lambda: \cT_I \rightarrow \mb{C}$. This much is true for every $\lambda \in \overline{V}$, but we will show that when $|\lambda| = 1$ then $\hat{\rho}_\lambda$ has to be multiplicative (and consequently that $\hat{\rho}_\lambda$ is a boundary representation for $\cA_I$).  

Let $(\sigma,H,P_L)$ be a minimal Stinespring dilation for $\hat{\rho}_\lambda$. Thus $L \subseteq H$ is a one dimensional subspace of a Hilbert space $H$, $P_L : H \rightarrow L$ is the orthogonal projection onto $L$, and $\sigma : \cT_I \rightarrow B(H)$ is a $*$-representation such that 
\bes
\hat{\rho}_\lambda (a) = P_L \sigma(a) P_L 
\ees
for all $a \in \cT_I$. Moreover, minimality means that 
\be\label{eq:span}
\overline{\spn}\{\sigma(a) \xi : a \in \cT_I, \xi \in L\} = H .
\ee 
Now, $\rho_\lambda = \hat{\rho}_\lambda \big|_{\cA_I}$ is multiplicative. By a well-known principle due to Sarason (Lemma 0 in \cite{Sarason}), $H$ decomposes as $H = H_1 \oplus L \oplus H_2$ such that with respect to this decomposition 
\bes
\sigma(a) = \begin{pmatrix}
  * & 0 & 0 \\
  * & \rho_\lambda(a) & 0 \\
  * & * & * \\ 
\end{pmatrix} 
\ees
for $a \in \cA_I$. In particular, we obtain for $i=1, \ldots, d$ 
\bes
\sigma(S_i) = \begin{pmatrix}
  a_i & 0 & 0 \\
  b_i & \lambda_i & 0 \\
  c_i & d_i & e_i \\ 
\end{pmatrix} .
\ees
From $\sum S_i S_i^* \leq I$ we obtain 
\bes
\sum b_i b_i^* + \|\lambda\|^2 = P_L \sum \sigma(S_i) \sigma(S_i)^* P_L \leq P_L I_H P_L = I_L = 1,
\ees
so $b_i = 0$ for all $i$. Thus $H$ decomposes as $H = L \oplus H_0$, and with respect to this decomposition 
\be\label{eq:matrix}
\sigma(S_i) = \begin{pmatrix}
  \lambda_i & 0 \\
  x_i & y_i \\
\end{pmatrix} \,\, , \,\, i = 1, \ldots, d.
\ee
There is no loss of generality in assuming that $\lambda = (1,0,\ldots, 0)$ (otherwise, apply a unitary to the orthonormal basis appearing in the construction of $\cA_I$ in the opening paragraphs). From (\ref{eq:matrix}) we obtain (since $\|\sigma(S_1)\| \leq 1$)
\bes
\sigma(S_1) = \begin{pmatrix}
  1 & 0 \\
  0 & y_1 \\
\end{pmatrix} .
\ees
and
\bes
\sigma(S_i) = \begin{pmatrix}
  0 & 0 \\
  x_i & y_i \\
\end{pmatrix} \,\, , \,\, i = 2, \ldots, d.
\ees
We will show that $y_1 = I_{H_0}$. This will complete the proof, because it would follow (as above) that $x_i,y_i$ are all zero for $i=2, \ldots, d$. Hence $L$ is reducing for $\sigma$, and because $\sigma$ is a minimal dilation we must have $H_0 = 0$ and $\hat{\rho}_\lambda = \sigma$, so it is a multiplicative functional. 

To show that $y_1 = I_{H_0}$ we proceed as follows. From (\ref{eq:span}) and (\ref{eq:matrix}) we find that 
\begin{align*}
&H_0 = \\
& \overline{\spn}\{p_1(y)p_2(y)^*\cdots p_{n-1}(y)^*p_n(y) x_i \Big|  i = 2, \ldots , d; p_1, \ldots, p_n  \mbox{ polynomials}  \}.
\end{align*}
It suffices to show that every element of the form 
\[
p_1(y)p_2(y)^*p_3(y)\cdots p_{n-1}(y)^*p_n(y) x_i
\]
is invariant under $y_1$. We will make repeated use of the basic fact that if $T$ is a contraction, then $Th = h \Leftrightarrow T^*h = h$. Plugging the matrices (\ref{eq:matrix}) in the identity $\sigma(S_i)\sigma(S_1) = \sigma(S_1) \sigma(S_i)$ we obtain 
\bes
y_1 x_i = x_i \,\,\,\, \textrm{ and } \,\,\,\, y_1 y_i = y_i y_1 . 
\ees
If $p(y)$ is a polynomial in $y_1, \ldots y_d$, then
\bes
y_1 p(y) x_i = p(y) y_1 x_i = p(y) x_i.
\ees
Since $\|y_1\| \leq 1$, we have $y_1^* p(y) x_i = p(y) x_i$ for all $i$. Now if $p$ and $q$ are polynomials, then
\bes
y_1^* q(y)^* p(y) x_i = q(y)^* y_1^* p(y) x_i = q(y)^* p(y) x_i,
\ees
and it follows that $y_1 q(y)^* p(y) x_i = q(y)^* p(y) x_i$. Continuing this way, we find that all elements of the form
\[
p_1(y)p_2(y^*)p_3(y)\cdots p_{n-1}(y)^*p_n(y) x_i
\] 
are invariant for $y_1$, thus $y_1 = I_{H_0}$. 
\end{proof}

We record a corollary of the above proof:
\begin{corollary}\label{cor:extension}
The character $\rho_\lambda$ of $\cA_I$ extends to character $\hat{\rho}_\lambda$ of $\cT_I$ if and only $\lambda \in \pV$. 
\end{corollary}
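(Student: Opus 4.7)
The plan is to read off both directions from the proof of Proposition~\ref{prop:spectra} that just preceded, so the proposal is essentially bookkeeping rather than new work.

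For the ``if'' direction, suppose $\lambda \in \pV$. This is exactly the content that was established in the body of the proof of Proposition~\ref{prop:spectra}: starting from the unital completely positive extension $\hat{\rho}_\lambda : \cT_I \to \mb{C}$ provided by Arveson's extension theorem, the Sarason--Stinespring analysis there forces the minimal dilation $\sigma$ to equal $\hat{\rho}_\lambda$ itself, so $\hat{\rho}_\lambda$ is multiplicative. I would simply invoke that argument rather than reproduce it.

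For the ``only if'' direction, suppose $\rho_\lambda$ extends to a character $\hat{\rho}_\lambda$ of $\cT_I$. Since $\cT_I \supseteq \cK$ and $\cK$ admits no character (it has no one-dimensional representation), the restriction $\hat{\rho}_\lambda\big|_\cK$ must vanish. In particular, recalling that $I_{\cF_I} - \sum_{i=1}^d S_i S_i^* = P_\mb{C}$ is the rank-one projection onto the vacuum vector and hence compact, one has
\[
0 = \hat{\rho}_\lambda(P_\mb{C}) = 1 - \sum_{i=1}^d \hat{\rho}_\lambda(S_i)\hat{\rho}_\lambda(S_i^*) = 1 - \sum_{i=1}^d |\lambda_i|^2,
\]
so $\|\lambda\| = 1$. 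Combined with $\lambda \in \oV$, this gives $\lambda \in \oV \cap \pBd = \pV$.

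There is no serious obstacle here: the only subtlety worth flagging explicitly is the use of the fact that the identity $P_\mb{C} = I - \sum S_i S_i^*$ lies in $\cT_I$ (which is why $\cK \subseteq \cT_I$ in the first place) and that any character on a C*-algebra containing $\cK$ must annihilate $\cK$. Both facts have already been invoked or cited in the preliminaries and in the preceding proof.
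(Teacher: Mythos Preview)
Your proposal is correct and matches the paper's approach exactly: the corollary is stated in the paper without a separate proof, simply as a consequence of the proof of Proposition~\ref{prop:spectra}, and you have extracted precisely the two halves of that proof (the compacts-annihilation argument for ``only if'' and the Sarason--Stinespring rigidity argument for ``if'').
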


Let $J$ denote the commutator ideal $\textrm{Comm}(\cT_I)$ of $\cT_I$. Then $J$ is the smallest ideal in $\cT_I$ having a commutative quotient. It is a basic fact that $J$ is equal to the intersection of the kernels of all characters on $\cT_I$. It follows that $\cK \subseteq J$ (because $\cK$ has no characters). Arveson's conjecture is that $\cK = J$. 

The quotient $\cT_I / J$ is commutative, so it equals the algebra of continuous functions on some compact space. What space? (The following two propositions are valid for every configuration of ideals  $I \triangleleft \textrm{Comm}(B) \triangleleft B$ in a C*-algebra $B$).

\begin{proposition}\label{prop:CS}
$\Spec(\cT_I / J) = \pV$, thus $\cT_I / J \cong C(\pV)$. 
\end{proposition}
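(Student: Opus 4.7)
\emph{Proof plan.} The plan is to derive this statement as a direct corollary of Proposition~\ref{prop:spectra} and the standard Gelfand correspondence. Let $\pi \colon \cT_I \to \cT_I / J$ denote the quotient map. Pullback along $\pi$ yields a weak-$*$ continuous injection
\[
\pi^* \colon \Spec(\cT_I / J) \longrightarrow \Spec(\cT_I), \qquad \psi \longmapsto \psi \circ \pi,
\]
whose image consists of exactly those characters of $\cT_I$ that annihilate $J$. So the problem reduces to showing that every character of $\cT_I$ already vanishes on $J$.

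This, however, is precisely the basic fact recalled immediately before the statement: because $J$ is the smallest ideal with commutative quotient, it equals the intersection of the kernels of all characters of $\cT_I$. Consequently every character of $\cT_I$ factors through $\pi$, and $\pi^*$ is a continuous bijection between two compact Hausdorff spaces, hence a homeomorphism. Chaining with $\Spec(\cT_I) \cong \pV$ from Proposition~\ref{prop:spectra} gives the first assertion. The displayed isomorphism $\cT_I / J \cong C(\pV)$ then follows immediately from the Gelfand representation applied to the unital commutative C*-algebra $\cT_I / J$.

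I do not anticipate any real obstacle here: the proposition is essentially a formal consequence of Proposition~\ref{prop:spectra} together with the functoriality of $\Spec$ under quotients and Gelfand duality. The only content, namely the identification $\Spec(\cT_I) \cong \pV$, has already been carried out; everything else is bookkeeping.
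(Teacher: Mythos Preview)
Your proposal is correct and follows essentially the same route as the paper: both pull characters back along the quotient map $\cT_I \to \cT_I/J$, note that every character of $\cT_I$ already annihilates $J$ (so the induced map on spectra is a bijection), and then invoke Proposition~\ref{prop:spectra} together with Gelfand duality. The only cosmetic difference is that the paper exhibits the inverse character $\Theta_\lambda$ explicitly, whereas you appeal to the compact-Hausdorff bijection argument; these are equivalent.
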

\begin{proof}
Let $q : \cT_I \rightarrow \cT_I / J$ be the quotient map. We have the adjoint map $q^* : \Spec(\cT_I/J) \rightarrow \Spec(\cT_I) = \pV$, a continuous and injective map, as the adjoint of any quotient map is. We must show that it is surjective. If $\lambda \in \pV$, then $J \subseteq \textrm{ker} \hat{\rho}_\lambda$, so $\hat{\rho}_\lambda$ induces a well defined character $\Theta_\lambda :  \cT_I / J \rightarrow \mb{C}$ given by $\Theta_\lambda(a+J) = \hat{\rho}_\lambda(a)$. Thus $\hat{\rho}_\lambda = q^* (\Theta_\lambda)$, and $q^*$ is surjective. 
\end{proof}

The algebra of interest is $\cO_I$.

\begin{proposition}\label{prop:specOI}
$\Spec(\cO_I) \cong \pV$. 
\end{proposition}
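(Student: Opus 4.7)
My plan is to identify $\Spec(\cO_I)$ with $\Spec(\cT_I)$ via the quotient map, using the fact that $\cO_I = \cT_I/\cK$ and that every character of $\cT_I$ annihilates $\cK$.

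First, let $q : \cT_I \to \cO_I$ denote the quotient map. Pulling back characters along $q$ gives a continuous injection
\[
q^* : \Spec(\cO_I) \to \Spec(\cT_I),
\]
whose image consists exactly of those characters of $\cT_I$ that vanish on $\cK$. The composition with the identification $\Spec(\cT_I) \cong \pV$ from Proposition \ref{prop:spectra} is what we wish to show is a homeomorphism. Since both spaces are compact Hausdorff, it is enough to show $q^*$ is surjective, i.e.\ that every character of $\cT_I$ kills $\cK$.

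The latter claim should follow from the fact that $\cK$ is a simple, infinite dimensional C*-algebra, hence has no characters: if $\phi \in \Spec(\cT_I)$, then $\phi|_\cK$ is an algebra homomorphism $\cK \to \mb{C}$, and $\ker(\phi|_\cK)$ is a closed two-sided ideal in $\cK$, hence is either $\{0\}$ or $\cK$. The first option is impossible because $\cK$ is infinite dimensional while $\mb{C}$ is one dimensional, so $\phi$ must vanish on $\cK$, as desired. (As a concrete alternative one can use the identity $I - \sum_i S_i S_i^* = P_\mb{C} \in \cK$ together with $\phi|_{\cA_I}=\rho_\lambda$ for $\lambda \in \pV$ from Proposition \ref{prop:spectra}, which already gives $\phi(P_\mb{C})=0$; combined with the fact that $\cK$ is generated as a C*-algebra by the orbit of $P_\mb{C}$ under $\cT_I$, this forces $\phi|_\cK = 0$.)

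Having established surjectivity of $q^*$, we conclude that $q^*$ is a continuous bijection between compact Hausdorff spaces, and therefore a homeomorphism. Composing with the homeomorphism $\Spec(\cT_I) \cong \pV$ from Proposition \ref{prop:spectra} yields $\Spec(\cO_I) \cong \pV$. I do not anticipate a serious obstacle here; the only mild subtlety is verifying that characters annihilate $\cK$, and the simplicity argument above handles this in a line. The result can equivalently be deduced from Proposition \ref{prop:CS} combined with $\cK \subseteq J$, which gives a factorization $\cT_I \twoheadrightarrow \cO_I \twoheadrightarrow \cT_I/J \cong C(\pV)$ and shows that every character of $\cT_I$ already factors through $\cO_I$.
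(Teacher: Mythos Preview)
Your proof is correct and the core idea coincides with the paper's: both amount to showing that the pullback $\Spec(\cO_I) \to \Spec(\cT_I)$ along the quotient map is a bijection, i.e.\ that every character of $\cT_I$ annihilates $\cK$. The paper takes a slightly more roundabout route, factoring $\cT_I \to \cO_I \to \cT_I/J$ (with $J$ the commutator ideal) and invoking Proposition~\ref{prop:CS} to get surjectivity of the induced map on spectra; this is exactly the alternative you sketch in your final sentence. Your primary argument --- that $\cK$ is simple and infinite dimensional, hence admits no nonzero $*$-homomorphism to $\mb{C}$ --- is more direct and self-contained, bypassing the detour through $J$ and Proposition~\ref{prop:CS} entirely. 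Indeed, the paper already uses the fact that characters annihilate the compacts in the proof of Proposition~\ref{prop:spectra} itself, so your approach simply reuses that observation rather than reproving it via the commutator ideal.
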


\begin{proof}
We observe that
\bes
\cO_I / (J / \cK) = (\cT_I / \cK) / (J / \cK) \cong \cT_I / J,
\ees
so we identify $\cT_I / J$ as a quotient of $\cO_I$. Consider the following three quotient maps:
\bes
\pi : \cT_I  \rightarrow \cO_I
\ees
\bes
\pi': \cO_I \rightarrow \cT_I / J
\ees
and
\bes
q : \cT_I  \rightarrow \cT_I / J .
\ees
Then $q = \pi' \circ \pi$, therefore $q^* = \pi^* \circ \pi'^*$. By Proposition \ref{prop:CS} $q^*$ is surjective, so $\pi^*$ is surjective, too. Since the adjoint of every quotient map is injective and continuous, $\Spec(\cO_I) \cong \Spec(\cT_I) \cong \pV$.
\end{proof}

\section{Essential norms in $\cA_I$ and the operator algebraic structure of $\pi(\cA_I)$}\label{sec:main}

\subsection{Essential norms and the Gelfand transform}

For $a \in \cA_I$ the {\em essential norm} of $a$ is defined to be the norm of the image of $a$ in $\cO_I$, that is,
\[
\|a\|_e = \|\pi(a)\|, 
\]
where $\pi : \cT_I \rightarrow \cO_I = \cT_I / \cK$ is as in the proof of Proposition \ref{prop:specOI}. Likewise, if $A = \left[a_{ij}\right]_{i,j=1}^n \in M_n(\cA_I)$ is a matrix of elements in $\cA_I$, the essential norm $\|A\|_e$ of $A$ is defined as the norm of its image in $M_n(\cO_I) = M_n(\cT_I) / M_n(\cK)$: 
\[
\|A\|_e = \left\|\left[\pi(a_{ij})\right]_{i,j=1}^n \right\|. 
\]

Given $A = \left[a_{ij}\right]_{i,j=1}^n \in M_n (\cA_I)$, we define its {\em Gelfand transform} $\hat{A} \in M_n(C(\pV))$ by 
\[
\hat{A}(z) = \left[\hat{a}_{i,j}(z) \right]_{i,j=1}^n := \left[\rho_z(a_{i,j}) \right]_{i,j=1}^n  .
\]

\begin{proposition}\label{prop:ess_norm_geq}
For all $A \in M_n(\cA_I)$, 
\be\label{eq:ess_norm_geq}
\|A\|_e \geq \sup_{z \in \pV} \left\|\hat{A}(z)\right\| .
\ee
\end{proposition}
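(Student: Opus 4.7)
The plan is to exploit the characters of $\cT_I$ constructed in Corollary \ref{cor:extension}. Specifically, for each $z \in \pV$, we have a character $\hat{\rho}_z$ of $\cT_I$ which restricts to the evaluation functional $\rho_z$ on $\cA_I$. The idea is that these characters descend to characters of the Cuntz-type quotient $\cO_I$, and then amplification will give the desired lower bound on the essential norm.

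First, fix $z \in \pV$. I would observe that $\hat{\rho}_z$ annihilates $\cK$: indeed, $\hat{\rho}_z$ is a $*$-homomorphism into $\mb{C}$, and $\cK$ is a simple C*-algebra, so the restriction of $\hat{\rho}_z$ to $\cK$ either is injective or vanishes; injectivity would embed $\cK$ into $\mb{C}$, which is impossible, so $\hat{\rho}_z |_\cK = 0$. Hence $\hat{\rho}_z$ factors through the quotient map $\pi : \cT_I \to \cO_I = \cT_I / \cK$ as $\hat{\rho}_z = \tilde{\rho}_z \circ \pi$ for a unique character $\tilde{\rho}_z : \cO_I \to \mb{C}$. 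In particular, $\tilde{\rho}_z \circ \pi |_{\cA_I} = \rho_z$.

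Next, I would amplify. Since $\tilde{\rho}_z$ is a unital $*$-homomorphism, its $n$-fold amplification $(\tilde{\rho}_z)_n : M_n(\cO_I) \to M_n(\mb{C})$ is also a unital $*$-homomorphism, hence contractive. For $A = [a_{ij}] \in M_n(\cA_I)$, applying $(\tilde{\rho}_z)_n$ to $\pi_n(A) = [\pi(a_{ij})]$ yields
\[
(\tilde{\rho}_z)_n(\pi_n(A)) = \bigl[\tilde{\rho}_z(\pi(a_{ij}))\bigr] = \bigl[\rho_z(a_{ij})\bigr] = \hat{A}(z).
\]
Therefore
\[
\|\hat{A}(z)\| = \|(\tilde{\rho}_z)_n(\pi_n(A))\| \leq \|\pi_n(A)\| = \|A\|_e.
\]
Taking the supremum over $z \in \pV$ gives (\ref{eq:ess_norm_geq}).

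There is no real obstacle here: the proof is a direct consequence of Corollary \ref{cor:extension} together with the fact that characters of a unital C*-algebra containing $\cK$ automatically vanish on $\cK$. The substantive work was already done in proving Proposition \ref{prop:spectra}, which produced the characters $\hat{\rho}_z$ for $z \in \pV$; what remains is simply to package those characters through the quotient and amplify.
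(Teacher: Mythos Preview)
Your proof is correct and follows essentially the same approach as the paper: the paper invokes the surjective $*$-homomorphism $\cO_I \to \cT_I/J \cong C(\pV)$ established in Proposition~\ref{prop:specOI}, which is precisely the bundling of all the characters $\tilde{\rho}_z$ you construct pointwise. You have simply unpacked that quotient map character by character, which amounts to the same argument.
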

\begin{proof}
In the proof of Proposition \ref{prop:specOI} we have noted the existence of a surjective $*$-representation $\cO_I \rightarrow \cT_I/J = C(\pV)$. 
\end{proof}

\subsection{Banach algebraic spectra}

Let $\cB$ be a commutative and unital Banach algebra, and let $\Spec(\cB)$ be its maximal ideal space. For $b = (b_1, \ldots, b_n)$ a tuple of elements in $\cB$ and $\rho \in \Spec(\cB)$ we denote $\rho(b) = (\rho(b_1), \ldots, \rho(b_n))$. The spectrum of $b$ in $\cB$ is defined to be 
\[
\sigma_\cB(b) = \{\rho(b) : \rho \in \Spec(\cB)\}.
\]
It is an exercise to show that $\sigma_\cB(b)$ is the complement in $\mb{C}^n$ of all $\lambda \in \mathbb{C}^n$ for which there exist $c_1, \ldots, c_n \in \cB$ such that $\sum(b_i - \lambda_i)c_i = 1$. 

If $\cB$ is generated by $b_1, \ldots, b_n$ then there is a natural identification $\sigma_\cB(b) = \Spec(\cB)$, similar to (\ref{eq:spec1}) and (\ref{eq:spec2}).

Let $Z = (Z_1, \ldots, Z_d) = (\pi(S_1), \ldots, \pi(S_d))$ denote the image of $S = (S_1, \ldots, S_d)$ in $\cO_I$, and let $\cB_I:=\overline{\alg}(Z)$ denote the unital norm closed algebra generated by $Z$ in $\cO_I$.

\begin{proposition}\label{prop:specSspecZ}
$\sigma_{\cB_I}(Z) \subseteq \sigma_{\cA_I}(S) = \overline{V}$.
\end{proposition}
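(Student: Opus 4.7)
The plan is to use the Bezout-style characterization of the joint spectrum noted just before the proposition: for a commutative unital Banach algebra $\cB$ and a tuple $b=(b_1,\ldots,b_n)$ in $\cB$, a point $\lambda$ lies outside $\sigma_\cB(b)$ if and only if there exist $c_1,\ldots,c_n \in \cB$ with $\sum_i(b_i-\lambda_i)c_i = 1$.

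First I would dispose of the equality $\sigma_{\cA_I}(S)=\overline{V}$: since $\cA_I$ is generated by $S_1,\ldots,S_d$, the natural identification $\Spec(\cA_I)=\sigma_{\cA_I}(S)$ holds, and the identifications (\ref{eq:spec1}) and (\ref{eq:spec2}) established in the preliminaries give $\Spec(\cA_I)\cong \overline{V}$. For the inclusion $\sigma_{\cB_I}(Z)\subseteq \sigma_{\cA_I}(S)$, I would take $\lambda \notin \sigma_{\cA_I}(S)$ and produce $a_1,\ldots,a_d \in \cA_I$ with $\sum_i(S_i-\lambda_i)a_i=1$ via the Bezout characterization. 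Applying the contractive unital algebra homomorphism $\pi:\cT_I\to\cO_I$ to this identity yields $\sum_i(Z_i-\lambda_i)\pi(a_i)=1$ in $\cO_I$.

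The last step is to observe $\pi(a_i)\in \cB_I$: since $\cA_I$ is the norm closure of polynomials in $S$ and $\pi$ is continuous, each $\pi(a_i)$ is a norm limit of the corresponding polynomials in $Z$, which lie in the closed algebra $\cB_I$. Applying the Bezout characterization in the other direction, now inside $\cB_I$, one concludes $\lambda \notin \sigma_{\cB_I}(Z)$.

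I do not foresee a genuine obstacle; the statement is essentially a functoriality observation for joint spectra under a contractive unital homomorphism between commutative Banach algebras, combined with the already-established character description of $\cA_I$. The only mild subtlety worth stating explicitly is the inclusion $\pi(\cA_I)\subseteq \cB_I$, which guarantees that the witnesses $\pi(a_i)$ actually belong to the smaller algebra in which we are computing the spectrum of $Z$.
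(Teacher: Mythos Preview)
Your proof is correct and follows essentially the same approach as the paper: the equality $\sigma_{\cA_I}(S)=\overline{V}$ is obtained from (\ref{eq:spec1}) and (\ref{eq:spec2}), and the inclusion $\sigma_{\cB_I}(Z)\subseteq\sigma_{\cA_I}(S)$ from the fact that $\pi(S_i)=Z_i$. The paper's proof is a two-line sketch, while you have spelled out the mechanism via the Bezout-type characterization introduced just before the proposition; this is exactly the natural way to unpack what the paper means by ``follows easily.''
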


\begin{proof}
The identification of $\sigma_{\cA_I}(S)$ with $\overline{V}$ follows from (\ref{eq:spec1}) and (\ref{eq:spec2}). The fact that $\sigma_{\cB_I}(Z)  \subseteq \overline{V}$ follows easily from the fact that $\pi(S_i) = Z_i$, $i=1, \ldots, d$.
\end{proof}

\subsection{The operator algebraic structure of $\pi(\cA_I)$}

Let $\pi(\cA_I)$ denote the image of $\cA_I$ in the quotient $\cO_I$. If Arveson's conjecture is true then $\cO_I \cong C(\partial V)$ and $\overline{\pi(\cA_I)}$ is therefore equal to the norm closed  algebra $A(V)$ generated by the polynomials in $C(\oV)$. The C*-envelope of the latter operator algebra is easily seen to be $C(\pV)$. Thus, Arveson's conjecture -- if true -- would imply the following theorem, which we now prove directly.

\begin{theorem}\label{thm:main}
Let $Z$ denote the image of $S$ in $\cO_I$, and let $\cB_I := \overline{\pi(\cA_I)}$ denote the unital norm closed algebra generated by $Z$. Then
\begin{enumerate}
\item The tuple $Z^*$ is subnormal. 
\item For every $A \in M_n(\cA_I)$, the essential norm of $A$ is equal to $\sup_{z \in \pV}\|\hat{A}(z)\|$. In particular, $\cB_I$ is completely isometrically isomorphic to $A(V)$. 
\item The C*-envelope of $\operatorname{Span} \{I,Z_1,Z_1^*,\ldots,Z_d,Z_d^*\}$ is equal to $C(\partial V)$.
\item The C*-envelope of the algebra $\cB_I$ is equal to $C(\partial V)$. 
\end{enumerate}
\end{theorem}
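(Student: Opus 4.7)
\emph{Part (1).} The rank-one projection $P_{\mb{C}} = I - \sum_i S_i S_i^*$ onto the scalars in $\cF_I$ is compact, so in $\cO_I$ one has $\sum_i Z_i Z_i^* = I$. In any faithful representation this says the commuting tuple $Z^* = (Z_1^*, \ldots, Z_d^*)$ is a spherical isometry, and Athavale's theorem then produces a commuting normal extension $N = (N_1, \ldots, N_d)$ of $Z$ on some $K \supseteq H$ with joint spectrum $\sigma(N) \subseteq \pBd$.

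\emph{Part (2).} One direction is Proposition \ref{prop:ess_norm_geq}. For the reverse, note $p(Z) = 0$ for every $p \in I$ (since $p(S) = 0$), hence $p(N) = 0$ and $\sigma(N) \subseteq \cZ(I) \cap \pBd \subseteq \pV$. As $H$ is co-invariant for $N$, it is semi-invariant, so Sarason's lemma applied to the commutative polynomial algebra in $N$ yields $P(Z) = (I_n \otimes P_H)\, P(N)\, (I_n \otimes P_H)$ for any $P \in M_n(\mb{C}[z])$. The joint functional calculus identifies $M_n(C^*(N))$ with $C(\sigma(N), M_n)$, giving
\[
\|P(Z)\| \leq \|P(N)\| = \sup_{\lambda \in \sigma(N)} \|P(\lambda)\|_{M_n} \leq \sup_{\lambda \in \pV} \|P(\lambda)\|_{M_n}.
\]
Approximating a general $A \in M_n(\cA_I)$ in norm by matrix polynomials in $S$, the Gelfand transforms converge uniformly on $\oV$, and one obtains the essential-norm formula. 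The second assertion then follows: the Gelfand transform restricted to $\pV$ gives a completely isometric embedding $\cB_I \hookrightarrow C(\pV)$ with image dense in $A(V)|_{\pV}$, which is isometric to $A(V)$ by the maximum modulus principle on the analytic variety $\cZ(I) \cap \Bd$.

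\emph{Parts (3) and (4).} The dilation argument generalizes to operator-system elements. For $X = A \otimes I + \sum_i B_i \otimes Z_i + \sum_i C_i \otimes Z_i^* \in M_n(\cS)$, set $Y = A \otimes I_K + \sum_i B_i \otimes N_i + \sum_i C_i \otimes N_i^*$. Since $N_i^* H \subseteq H$ and $P_H N_i|_H = Z_i$, a direct check gives $X = (I_n \otimes P_H)\, Y\, (I_n \otimes P_H)|_{\mb{C}^n \otimes H}$. The element $Y$ lies in $M_n(C^*(N)) \cong C(\sigma(N), M_n)$, so $\|Y\| = \sup_{\lambda \in \sigma(N)}\|\hat X(\lambda)\|_{M_n}$, and hence $\|X\|_e \leq \sup_{\pV}\|\hat X\|$. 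Combined with Proposition \ref{prop:ess_norm_geq}, the canonical quotient $\cO_I \to C(\pV)$ restricts to a unital complete isometry on $\cS$, hence a complete order isomorphism onto its image, which generates $C(\pV)$ by Stone--Weierstrass (the coordinate functions separate points of $\pV$). To conclude that $C(\pV)$ is the C*-envelope, I would verify that every $\lambda \in \pV$ lies in the Choquet boundary of $\cB_I$ (and hence of $\cS$) inside $C(\pV)$. The function $f_\lambda(z) := \sum_i \bar\lambda_i z_i$ belongs to $\cB_I$, and Cauchy--Schwarz gives $|f_\lambda| \leq 1$ on $\pBd$ with $f_\lambda(z) = 1$ forcing $z = \lambda$. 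So for any probability measure $\nu$ on $\pV$ representing the evaluation $\text{ev}_\lambda$ on $\cB_I$, $\int f_\lambda\, d\nu = 1$ forces $f_\lambda = 1$ $\nu$-a.e.\ (via $\int (1 - \text{Re}\, f_\lambda)\, d\nu = 0$), and hence $\nu = \delta_\lambda$. Therefore $C^*_e(\cB_I) = C^*_e(\cS) = C(\pV)$.

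\emph{Main obstacle.} The load-bearing step is Part (1): the passage from the algebraic identity $\sum_i Z_i Z_i^* = I$ in $\cO_I$ to a concrete normal dilation with joint spectrum captured by the geometric set $\pV$ rests on Athavale's subnormality theorem together with the observation that the polynomial relations defining $V$ survive in the dilation. Once that geometric dilation is in hand, the remaining parts reduce to a Sarason-type compression identity, the joint functional calculus for commuting normals, and a standard peak-function measure estimate; the main technical care needed is in handling the matrix amplifications cleanly and in matching the Gelfand transform of $X$ with the evaluation $Y(\lambda)$ of the dilated normal element.
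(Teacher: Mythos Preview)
Your overall architecture is correct and close to the paper's, but the \emph{mechanism} you use to localize $\sigma(N)$ inside $\partial V$ is genuinely different, and that step carries an unjustified claim.

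The paper localizes $\sigma(N)$ by invoking Putinar's spectral inclusion theorem for subnormal tuples: $\sigma(N)=Sp(N,K)\subseteq Sp(Z,H)\subseteq \sigma_{\cB_I}(Z)\subseteq \overline{V}$, the last containment being Proposition~\ref{prop:specSspecZ}. You instead argue that $p(Z)=0$ for $p\in I$ forces $p(N)=0$, whence $\sigma(N)\subseteq \cZ(I)\cap\partial\mathbb{B}_d=\partial V$ (the last equality using the standing assumption~(\ref{eq:stand_assum})). This is more elementary --- it avoids the Taylor spectrum entirely --- but the implication ``$p(Z)=0\Rightarrow p(N)=0$'' is not automatic from co-invariance of $H$ and you have not justified it. What you get for free is only $\bar p(N^*)|_H=\bar p(Z^*)=0$, i.e.\ $H\subseteq\ker p(N)^*$. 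To finish you must take $N$ to be the \emph{minimal} normal extension (which you never said) and then observe: since $N$ is a commuting normal tuple, $p(N)$ is normal and commutes with every $N_i$ and $N_i^*$, so $\ker \bar p(N^*)$ is a reducing subspace for $N$ containing $H$; minimality forces $\ker \bar p(N^*)=K$, hence $p(N)=0$. Once you insert this, your route is complete and arguably cleaner than the paper's appeal to Putinar.

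The remainder of your proof --- the compression inequality for matrix polynomials, the peak-function identification of the Choquet boundary via $f_\lambda(z)=\langle z,\lambda\rangle$ --- matches the paper. Your separate dilation computation for operator-system elements in Part~(3) is correct but redundant: once Part~(2) gives a complete isometry $\cB_I\cong A(V)$, the operator system $\cS$ is already completely order-isomorphic to its image in $C(\partial V)$, and the Choquet-boundary argument suffices for both (3) and (4), exactly as the paper does.
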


\begin{proof}
Consider $\cB_I$ as a concrete operator algebra inside $B(H)$. 

Since $Z_1, \ldots, Z_d$ is a commuting row contraction satisfying $\sum Z_i Z_i^* = I_H$ a theorem of Athavale (\cite[Proposition 2]{Athavale}, see also \cite[p. 217]{Arv98}) implies that $Z_1^*, \ldots, Z_d^*$ is subnormal. That proves (1). 

To be precise, the theorem of Athavale says that there exists a tuple $N = (N_1, \ldots, N_1)$ of normal operators on a Hilbert space $K \supseteq H$ such that $N_i^*\big|_H = Z_i^*$, and such that the joint spectrum $X:=\sigma(N)$ of $N$ is contained in $\pBd$. 
We may assume that $N$ is the minimal normal extension of $Z$, and we identify $C^*(N)$ with $C(X)$. 

Let 
\[
N_i = \begin{pmatrix}
  Z_i & 0 \\
  X_i & Y_i\\
\end{pmatrix} .
\]

By Putinar's spectral inclusion theorem \cite{Putinar84}, the Taylor joint spectra satisfies $Sp(N,K)  \subseteq Sp(Z,H)$. Since $N$ is normal, $Sp(N,K) = \sigma(N) = X$, and the inclusion $Sp(Z,H) \subseteq \sigma_{\cB_I}(Z)$ is always true (see Proposition IV.25.3, \cite{MullerBook}). By Proposition \ref{prop:specSspecZ}, $\sigma_{\cB_I}(Z) \subseteq \oV$, thus $X \subseteq \pV$. We will soon see that in fact $X = \pV$. 

If $p$ is a polynomial, then 
\[
p(N) = \begin{pmatrix}
  p(Z) & 0 \\
  * & *\\
\end{pmatrix} ,
\]
whence $\|p(Z)\| \leq \|p(N)\| \leq \sup_{z \in X} |p(z)| \leq \sup_{z \in \pV} |p(z)|$. Using Proposition \ref{prop:ess_norm_geq}, 
\[
\|p(S)\|_e = \|p(Z)\| \leq \sup_{z \in \pV} |p(z)| = \sup_{z \in \pV} |\widehat{p(S)}(z)| \leq \|p(S)\|_e ,
\]
thus $\|p(Z)\| = \sup_{z \in \pV} |p(z)|$. The same argument works for matrix valued polynomials, after reshuffling. Taking norm limits of polynomials, we obtain the first part of (2). The second part of (2) follows immediately.  


The argument that (2) implies (3) and (4) is standard, but for completeness we give some details. 
(For facts about the Choquet boundary of a separating subset of a uniform algebra, we refer the reader to \cite[Section 8]{Phelps}.) 
The subspace $\cZ := \operatorname{Span} \{I,Z_1,\ldots,Z_d\}$, viewed (completely isometrically) as a subspace of $C(\overline{V})$, contains the constant functions and separates points. Hence the closure of the Choquet boundary of $\cZ$ is the Shilov boundary of $\cZ$ in $C(\overline{V})$. By the maximum principle, the Shilov boundary is contained in $\partial V$. On the other hand, for every $\lambda \in \partial V$, the function $f_\lambda (z) = \langle z, \lambda \rangle  = \sum \overline{\lambda_i} z_i$ peaks at $z = \lambda$. Hence the Choquet boundary must be equal to $\partial V$. The C*-envelope of $\cZ$ is therefore $C(\partial V)$. The C*-envelope of $\cZ$ is equal to the C*-envelope of $\cZ + \cZ^* = \operatorname{Span} \{I,Z_1,Z_1^*,\ldots,Z_d,Z_d^*\}$. 

The proof of (4) is similar. 
\end{proof}

\begin{remark}
By \cite[Theorem 1.1]{FangXia11}, there exists a multiplier $f$ on $H^2_d$ such that $\|M_f\|_e > \|f\|_\infty$. Thus the second assertion of the theorem cannot be improved to all multipliers, and is not trivial. 
\end{remark} 

As a consequence of Theorem \ref{thm:main} above we obtain

\begin{corollary}
$\sigma_{\cB_I}(Z) = \oV$. 
\end{corollary}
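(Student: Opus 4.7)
The plan is to establish the two inclusions separately. The containment $\sigma_{\cB_I}(Z) \subseteq \oV$ is already provided by Proposition \ref{prop:specSspecZ}, so the only real work lies in proving the reverse inclusion $\oV \subseteq \sigma_{\cB_I}(Z)$.

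For the reverse inclusion, I would invoke Theorem \ref{thm:main}(2), which identifies $\cB_I$ completely isometrically with the uniform algebra $A(V)$ via the map sending $Z_i$ to the coordinate function $z_i$. Since $\cB_I$ is commutative (its generators $Z_1,\ldots,Z_d$ commute) and singly generated by the tuple $Z$, its joint spectrum can be read off from $\Spec(\cB_I)$ via the identification $\rho \mapsto (\rho(Z_1),\ldots,\rho(Z_d))$ noted before Proposition \ref{prop:specSspecZ}. Thus it suffices to produce, for each $\lambda \in \oV$, a character $\rho_\lambda$ of $\cB_I$ with $\rho_\lambda(Z_i) = \lambda_i$.

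This step is essentially immediate once $\cB_I$ is identified with $A(V)$: for any $\lambda \in \oV$, the point evaluation $p \mapsto p(\lambda)$ is bounded on polynomials by $\sup_{z \in \oV}|p(z)|$, hence extends by continuity to a character of $A(V)$ sending $z_i$ to $\lambda_i$. Pulling this character back along the isomorphism $\cB_I \cong A(V)$ yields the required $\rho_\lambda$, and hence $\lambda \in \sigma_{\cB_I}(Z)$.

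I do not anticipate a substantive obstacle; the corollary is largely a repackaging of Theorem \ref{thm:main}(2) in the language of joint spectra. The only minor subtlety is that Theorem \ref{thm:main}(2) expresses the essential norm as $\sup_{z \in \pV}|\hat p(z)|$ rather than $\sup_{z \in \oV}|\hat p(z)|$; however, the maximum modulus principle for polynomials restricted to the variety $V$ identifies these two suprema, so point evaluations at all points of $\oV$ (not just $\pV$) are automatically contractive on $A(V)$, which is what the argument requires.
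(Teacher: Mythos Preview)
Your proof is correct and follows essentially the same approach as the paper, which simply records the corollary as ``a consequence of Theorem~\ref{thm:main}'' without supplying details. Your reconstruction---one inclusion from Proposition~\ref{prop:specSspecZ}, the other by pulling back point evaluations through the identification $\cB_I \cong A(V)$ of Theorem~\ref{thm:main}(2), with the maximum modulus principle bridging $\sup_{\pV}$ and $\sup_{\oV}$---is exactly what the paper has in mind.
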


We also record the following corollary of the proof of Theorem \ref{thm:main}. 

\begin{corollary} \label{cor:c-star-env-comm-tuple}
Let $T = (T_1, \ldots, T_d)$ be a commuting tuple of operators acting on $H$. I
f $\sum T_i T_i^* = I$, then $\overline{\alg}(T)$ is completely isometrically isomorphic to a uniform algebra, and in particular the C*-envelope of $\operatorname{Span}\{I, T_1, T_1^*, \ldots, T_d, T_d^*\}$ is commutative. 
\end{corollary}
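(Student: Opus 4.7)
The approach is to recognize that the corollary is essentially an abstraction of parts (1)--(3) of Theorem \ref{thm:main}: that argument uses the tuple $Z$ only through the hypotheses that it is commuting and satisfies $\sum Z_i Z_i^* = I$. I would simply replay that argument with $T$ in place of $Z$, substituting a direct character argument for the essential-norm lower bound supplied by Proposition \ref{prop:ess_norm_geq}.

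In detail, Athavale's theorem produces a minimal normal extension $N = (N_1, \ldots, N_d)$ of $T^* = (T_1^*, \ldots, T_d^*)$ on a larger space $K \supseteq H$, whose joint spectrum $X := \sigma(N)$ lies in $\pBd$. Relative to the decomposition $K = H \oplus H^\perp$, each $N_i$ is block lower-triangular with $T_i$ in the $(1,1)$-position, so for any matrix polynomial $P \in M_n(\mb{C}[z])$ the operator $P(T)$ is a compression of $P(N)$, and thus
\[
\|P(T)\| \leq \|P(N)\| = \sup_{z \in X} \|P(z)\|,
\]
where the last equality uses normality of $N$ together with $C^*(N) \cong C(X)$. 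Conversely, Putinar's spectral inclusion theorem gives $X \subseteq \operatorname{Sp}(T, H) \subseteq \sigma_{\overline{\alg}(T)}(T)$, and each $\lambda \in \sigma_{\overline{\alg}(T)}(T)$ yields a character $\rho_\lambda$ of $\overline{\alg}(T)$, which is automatically completely contractive. Evaluating at $\lambda \in X$ thus gives $\|P(\lambda)\| \leq \|P(T)\|$, and combining with the previous inequality,
\[
\|P(T)\| = \sup_{z \in X} \|P(z)\|.
\]
Passing to norm limits, $\overline{\alg}(T)$ is completely isometrically isomorphic to the uniform algebra $A \subseteq C(X)$ generated by $1, z_1, \ldots, z_d$.

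For the second assertion, the C*-envelope of any such uniform algebra is $C(\partial_S A)$, the continuous functions on its Shilov boundary, hence commutative. The Choquet-boundary argument from the last paragraph of the proof of Theorem \ref{thm:main} carries over unchanged to identify the C*-envelope of the operator system $\operatorname{Span}\{I, T_1, T_1^*, \ldots, T_d, T_d^*\}$ with that of the algebra $\overline{\alg}(T)$, which is therefore commutative. Because the argument is a faithful replay of the theorem, there is no genuine obstacle; the only point that requires any care is checking that the character/complete-contractivity argument cleanly replaces the $\pV$-based lower bound used in the original proof.
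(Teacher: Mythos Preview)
Your proposal is correct and follows exactly the route the paper intends: the corollary is stated as a direct consequence of the proof of Theorem~\ref{thm:main}, and you have faithfully replayed that argument with $T$ in place of $Z$, correctly observing that the only step requiring adaptation is the lower bound, which you recover from Putinar's inclusion $X \subseteq \sigma_{\overline{\alg}(T)}(T)$ and the complete contractivity of characters. The remainder (Athavale's theorem, the compression bound, and the peaking argument on $X \subseteq \pBd$) carries over verbatim.
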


\subsection{A C*-envelope approach to Arveson's conjecture}
\label{sec:suggest-strategy}

The results of the previous section suggest the following strategy for proving Arveson's conjecture. Since we already know that the C*-envelope of $\cB_I = \overline{\pi(\cA_I)}$ is isomorphic to $C(\pV)$, it remains to show that the Shilov boundary ideal of $\cB_I$ in $\cO_I$ is trivial. As the Shilov boundary of $\cA_I$ in $\cT_I$ is trivial (Theorem 10.4 in \cite{DRS}), one might hope that this passes to the quotient, i.e., that the Shilov boundary of  $\pi(\cA_I)$ in $\cO_I$ is also trivial. In fact, the research behind this paper was initiated with the hope of following this strategy, and there was some reason to believe it could be carried through because it works for certain nice examples in the commutative case. 

\begin{example}
Let $A$ be a unital subalgebra of $C(X)$, where $X$ is a compact metric space. Let $U$ be an open set in $X$ and let $I$ be the ideal of functions that vanish on $U$. Then if the Shilov boundary ideal of $A$ in $C(X)$ is trivial, and if $\pi : C(X) \rightarrow C(X)/I$ denotes the quotient map, then the Shilov boundary ideal of $\pi(A)$ in $C(X)/I$ is also trivial. Indeed, under these assumptions $I = I_F = \{f \in C(X) : f \big|_F = 0\}$, where $F = \overline{U}$. Then $C(X)/I \cong C(F)$, $\pi$ may be identified with the restriction map $f \mapsto f \big|_F$, and $\pi(A)$ may be identified with the restriction algebra $A_F$.  But by a theorem of Bishop \cite[Corollary 8.2]{Phelps}, the peak points of $A$ correspond to that Choquet boundary of $A$, hence they are dense in $X$. It follows that the peak points of $A_F$ are dense in $F$, so $F$ is the Shilov boundary of $A_F$ in $C(F)$, as required. 
\end{example}

Unfortunately, triviality of the of the Shilov ideal does not always pass to the quotient, even in the commutative case. 
\begin{example}
Let $X = [0,1]$, let $A$ be the algebra of continuous functions vanishing at the point $0$, and let $I_F$ be the ideal of functions vanishing on $F = \{0,1/2\}$. Then the Shilov ideal of $A$ is trivial, while that of $A_F$ is not. 
\end{example}
The following example 
due to Ken Davidson 
shows that in general triviality of the Shilov boundary does not pass to quotients, even in the irreducible case. 

\begin{example}
Let $\{e_n\}$ be an orthonormal basis of a Hilbert space $H$. Let $\{t_n\}_{n=3}^\infty$ be a dense sequence in the unit disc $\mb{D}$ and set
$t_1 = 2$ and $t_2 = 0$. Define $T \in B(H)$ by $T e_n = t_n e_n + \frac{1}{n} e_{n+1}$ for $n=1,2,\ldots$. 

Denote $\cA = \alg(T)$. We first show that $C^*_{e}(\cA) = C^*(\cA)$. Indeed, a computation shows that $T^*T = T^*T \big|_{\operatorname{span}\{e_1\}} \oplus T^*T\big|_{\{e_1\}^\perp}$, and since $T^* T e_1 = 5 e_1$ and $\|T^*T\big|_{\{e_1\}^\perp}\|<5$ it follows that the rank $1$ projection $e_1 e_1^*$ is in $C^*(T)$. One may check that $C^*(T)$ is irreducible, so $\cK \subseteq C^*(T)$. But as the quotient map is not isometric on $\cA$ we have that the Shilov ideal of $\cA$ is trivial and $C^*_{e}(\cA) = C^*(T)$. 

Let $\pi :C^*(T) \rightarrow C^*(T) / \cK$ be the quotient map. We now claim that the triviality of the Shilov ideal does not pass down to $\pi(\cA)$. To see this, note that $T$ is a diagonal-plus-compact operator, so it is essentially normal. The choice of $\{t_n\}$ ensures that $\sigma_e(T) = \overline{\mb{D}}$. It follows that $\overline{\pi(\cA)}$ is completely isometrically isomorphic to the disc algebra $A(\mb{D})$ and that $C^*(T) / \cK \cong C(\overline{\mb{D}})$. Thus $C^*(T)/ \cK$ is not the C*-envelope of $\pi(\cA)$. 
\end{example}

\section{Essential normality and hyperrigidity}
\label{sec:hyperrigidity}

\subsection{Hyperrigidity}

In this section we will establish a connection between the essential normality of a commuting tuple of operators and the behaviour of the tuple as the generating set of a C*-algebra.

A key idea in Arveson's recent work on completely positive maps and the noncommutative Choquet boundary is the following notion of rigidity for a completely positive map (see for example \cite{Arv08} and \cite{Arv11}).

\begin{definition}
Let $\cS$ be an operator system (i.e. a unital self-adjoint subspace) that generates a C*-algebra $\cT$.  A unital completely positive (UCP) map $
\phi:\cS \to \cB(H)$ is said to have the \emph{unique extension property} if it has a unique  extension to a UCP map $\tilde{\phi}:\cT \to \cB(H)$.
\end{definition}

The boundary representations of $\cT$ for $\cS$, which were first singled out in \cite{Arv69}, are precisely the irreducible representations $\pi:\cT \to \cB(H)$ with the property that the restriction $\pi|_\cS$ has the unique extension property. The existence of boundary representations was an open question for some time, but it is now known (see \cite{Arv08} and \cite{DK13}) that boundary representations exist in abundance.

In \cite{Arv11}, Arveson introduced the following definition to highlight the importance of algebras with the property that every representation has the unique extension property.

\begin{definition}
Let $S$ be a generating set of a C*-algebra $\cT$, and let $\cS$ denote the operator system generated by the elements in $S$. The set $S$ is said to be \emph{hyperrigid} if for every nondegenerate representation $\pi:\cT \to \cB(H)$, the restriction $\pi|_\cS$ has the unique extension property, i.e. $\pi$ is the unique UCP extension of $\pi |_\cS$ to $\cT$.
\end{definition}

\begin{remark}
We note that this is not Arveson's original definition of hyperrigidity. However, by \cite[Theorem 2.1]{Arv11}, it is completely equivalent, and we feel the definition given here fits better with the theme of our paper.
\end{remark}

An important property of hyperrigidity is that it passes to quotients \cite[Corollary 2.2]{Arv11}. This will be crucial for what follows, and should be compared to the remarks in Section \ref{sec:suggest-strategy}.

The main result in this section, Theorem \ref{thm:main-hr}, is that the $d$-tuple $S = (S_1,\ldots,S_d)$ defined in the introduction is essentially normal if and only if it is hyperrigid.

\subsection{Facts about singular representations}

We continue to use the notation from the introduction, and in addition define the operator system $\cS = \operatorname{Span} \{I,S_1,S_1^*,\ldots,S_d,S_d^* \}$.
A representation $\rho:\mathcal{T}_{I}\to\mathcal{B}(H)$ is said to be \emph{singular} if it annihilates the compact operators $\cK$.
In this section, we will gather some facts about singular representations of $\cT_I$.

\begin{lemma}
\label{lem:top-right-corner}
Let $\rho:\mathcal{T}_{I}\to\mathcal{B}(H)$
be a singular representation, and let $\pi:\cT_I \to \cB(K)$ be a  representation such that $\pi |_{\cS}$ is a dilation of  $\rho |_{\cS}$.
Then the subspace $H$ is coinvariant for $\pi(\cA_I)$. 
\end{lemma}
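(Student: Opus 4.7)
The plan is to exploit the identity $I_{\cF_I} - \sum_{i=1}^d S_i S_i^* = P_{\mb{C}} \in \cK$, which was already used in the proof of Proposition \ref{prop:spectra}. Since $\rho$ is singular, applying $\rho$ to this identity gives $\sum_i \rho(S_i) \rho(S_i)^* = I_H$. This ``saturation at the top of the dilation,'' combined with the fact that $\pi(S)$ remains a row contraction on $K$, will force triangularity of each $\pi(S_i)$ with respect to $K = H \oplus H^\perp$, from which coinvariance of $H$ for $\pi(\cA_I)$ is immediate.

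Concretely, I would first decompose $K = H \oplus H^\perp$ and, using the dilation property $\rho(s) = P_H \pi(s)\big|_H$ for every $s \in \cS$, write
\[
\pi(S_i) = \begin{pmatrix} \rho(S_i) & X_i \\ Y_i & Z_i \end{pmatrix}, \quad i = 1,\ldots,d,
\]
with respect to this decomposition. Coinvariance of $H$ for $\pi(\cA_I)$ is equivalent to the statement that every $X_i$ vanishes: the resulting lower-triangular form of the generators is preserved under products, linear combinations, and norm limits, so $H$ would then be coinvariant for the entire unital norm-closed algebra generated by $\pi(S_1),\ldots,\pi(S_d)$, which is precisely $\pi(\cA_I)$.

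To produce the vanishing, I would compress the row-contraction inequality $\sum_i \pi(S_i)\pi(S_i)^* \leq I_K$ to $H$. A short block computation yields
\[
I_H \geq P_H \Bigl(\sum_i \pi(S_i)\pi(S_i)^*\Bigr) P_H = \sum_i \rho(S_i)\rho(S_i)^* + \sum_i X_i X_i^* = I_H + \sum_i X_i X_i^*,
\]
where the last equality uses the saturation identity $\sum_i \rho(S_i)\rho(S_i)^* = I_H$ obtained above from the singularity of $\rho$. Hence $\sum_i X_i X_i^* \leq 0$, forcing $X_i = 0$ for every $i$.

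I do not anticipate any genuine obstacle beyond this two-line computation. The only subtle point worth flagging is that the hypothesis gives a dilation only on the operator system $\cS$ (not on $\cA_I$), so one cannot directly invoke multiplicativity of $\pi|_{\cA_I}$ --- but it is precisely the combination of the upper bound $\sum_i \pi(S_i)\pi(S_i)^* \leq I_K$ (which uses $\pi$ being a $\ast$-representation of $\cT_I$) with the saturated equality on $H$ (which uses only $\pi|_\cS$ being a dilation of the \emph{singular} $\rho|_\cS$) that does all the work.
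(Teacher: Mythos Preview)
Your proposal is correct and follows essentially the same approach as the paper: decompose $K = H \oplus H^\perp$, write $\pi(S_i)$ in block form, use the singularity of $\rho$ together with $I - \sum S_i S_i^* \in \cK$ to get $\sum \rho(S_i)\rho(S_i)^* = I_H$, and then compress the row-contraction inequality to force $X_i = 0$. Your additional remark that the lower-triangular form of the generators propagates to all of $\pi(\cA_I)$ is a useful observation the paper leaves implicit.
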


\begin{proof}
With respect to the decomposition $K=H\oplus H^{\perp}$
we can write
\[
\pi(S_{i})=\left(\begin{array}{cc}
\rho(S_{i}) & X_{i}\\
Y_{i} & Z_{i}
\end{array}\right),\quad1\leq i\leq d.
\]
Note that $X_{i}=P_{H}\pi(S_{i})|_{H^{\perp}}$.
We must prove that $X_{i}=0$ for each $i$.

We can write the rank one projection $P_\mathbb{C}$ onto $\mathbb{C} = E_0$ as
\[
P_\mathbb{C} = I - \sum_{i = 1}^d M_{z_i} M_{z_i}^*,
\]
which implies
\[
\sum_{i=1}^{d}S_{i}S_{i}^{*}=I-P_{\cF_I}P_{\mathbb{C}} |_{\cF_I}.
\]
Then since $\rho$ annihilates $\cK$, which, in particular, contains $P_{\cF_I}P_{\mathbb{C}} |_{\cF_I}$,
\[
\sum_{i=1}^{d}\rho(S_{i}S_{i}^{*})=I.
\]
This gives
\begin{eqnarray*}
I & \geq & \sum_{i=1}^{d}P_{H}\pi(S_{i}S_{i}^{*})|_{H}\\
 & = & \sum_{i=1}^{d}\rho(S_{i}S_{i}^{*})+\sum_{i=1}^{d}X_{i}X_{i}^{*}\\
 & = & I+\sum_{i=1}^{d}X_{i}X_{i}^{*}.
\end{eqnarray*}
Therefore, $\sum_{i=1}^{d}X_{i}X_{i}^{*}=0$, and hence $X_{i}=0$
for each $i$.
\end{proof}

\begin{lemma}\label{lem:agree}
\label{lem:special-products}
Let $\rho:\mathcal{T}_{I}\to\mathcal{B}(H)$
be a singular representation, and let $\phi:\cT_I \to \cB(H)$ be a UCP map such that $\phi |_{\cS} = \rho |_{\cS}$.
Then $\phi(T) = \rho(T)$ for every $T$ in the closure of $\operatorname{span}\{A_{1}A_{2}^{*}\mid A_{1},A_{2}\in\mathcal{A}_{I}\}$,
\end{lemma}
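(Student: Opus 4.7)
The plan is to exploit Stinespring dilation of $\phi$ together with Lemma \ref{lem:top-right-corner} to reduce everything to a two-by-two block matrix computation. First, I would apply Stinespring's theorem to the UCP map $\phi$ to obtain a Hilbert space $K \supseteq H$ and a (nondegenerate) $*$-representation $\pi : \cT_I \to B(K)$ such that $\phi(T) = P_H \pi(T)|_H$ for every $T \in \cT_I$. Since $\phi|_\cS = \rho|_\cS$ by hypothesis, the restriction $\pi|_\cS$ is a dilation of $\rho|_\cS$, so Lemma \ref{lem:top-right-corner} applies: $H$ is coinvariant for $\pi(\cA_I)$.

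Coinvariance means that, with respect to the decomposition $K = H \oplus H^\perp$, every $\pi(A)$ with $A \in \cA_I$ has the block form
\[
\pi(A) = \begin{pmatrix} a & 0 \\ b & c \end{pmatrix},\qquad \pi(A)^* = \begin{pmatrix} a^* & b^* \\ 0 & c^* \end{pmatrix},
\]
where $a = P_H \pi(A)|_H = \phi(A)$. Multiplying two such matrices and one adjoint together, I would read off directly that
\[
P_H \pi(A_1 A_2^*)|_H = a_1 a_2^* = \phi(A_1)\,\phi(A_2)^* = \phi(A_1)\,\phi(A_2^*),
\]
using that UCP maps preserve adjoints. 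The same calculation with $A_2 = I$ shows that $\phi|_{\cA_I}$ is itself a unital homomorphism (this is just the well-known consequence of coinvariance for an algebra).

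Next I would show $\phi$ and $\rho$ agree on all of $\cA_I$. Both $\phi|_{\cA_I}$ and $\rho|_{\cA_I}$ are unital homomorphisms, and by hypothesis they coincide on the generators $S_1,\ldots,S_d$ (and on $I$), which lie in $\cS$. Since $\cA_I$ is the norm closure of the unital algebra generated by $S_1,\ldots,S_d$ and both maps are bounded, $\phi(A) = \rho(A)$ for every $A \in \cA_I$. Combining this with the block computation yields, for $A_1,A_2 \in \cA_I$,
\[
\phi(A_1 A_2^*) = \phi(A_1)\,\phi(A_2)^* = \rho(A_1)\,\rho(A_2)^* = \rho(A_1 A_2^*),
\]
and the conclusion for arbitrary $T$ in the closure of $\operatorname{span}\{A_1 A_2^* : A_1, A_2 \in \cA_I\}$ follows by linearity and norm continuity.

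There is no real obstacle here; the mild subtlety is simply recognizing that the conclusion of Lemma \ref{lem:top-right-corner} converts the compressed map $A \mapsto P_H \pi(A)|_H$ into a genuine homomorphism on $\cA_I$, which is what allows us to promote agreement on the generating operator system $\cS$ to agreement on the (much larger) space of products $A_1 A_2^*$.
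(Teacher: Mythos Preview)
Your proof is correct and follows essentially the same route as the paper: Stinespring dilation of $\phi$, then Lemma \ref{lem:top-right-corner} to get coinvariance of $H$, then a block computation to conclude $\phi(A_1 A_2^*) = \phi(A_1)\phi(A_2)^* = \rho(A_1)\rho(A_2)^* = \rho(A_1 A_2^*)$. The paper compresses this into one line and leaves the equality $\phi|_{\cA_I} = \rho|_{\cA_I}$ implicit; you spell it out, which is a virtue. One small quibble: the sentence ``The same calculation with $A_2 = I$ shows that $\phi|_{\cA_I}$ is a unital homomorphism'' is not quite right as stated (setting $A_2 = I$ gives a tautology); what you want is the analogous block computation for $\pi(A_1 A_2)$ rather than $\pi(A_1 A_2^*)$, which indeed yields $\phi(A_1 A_2) = \phi(A_1)\phi(A_2)$ from coinvariance---and your parenthetical makes clear you know this.
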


\begin{proof}
Let $\pi:\cT_I \to \cB(K)$ be a Stinespring dilation of $\phi$, so that
\[
\phi(T) = P_H \pi(T) |_H, \quad T \in \cT_I.
\]
Then in particular, $\pi |_{\cS}$ is a dilation of $\rho |_{\cS}$. Hence by Lemma \ref{lem:top-right-corner}, the subspace $H$ is coinvariant for $\pi(\cA_I)$.
It follows immediately that for $A_1,A_2 \in \cA_I$,
\[
\phi(A_1 A_2^*) = P_H \pi(A_1 A_2^*) |_H = P_H \pi(A_1) P_H \pi(A_2^*) |_H = \rho(A_1 A_2^*),
\]
and the result follows by continuity after taking linear combinations.
\end{proof}

\begin{lemma}\label{lem:ann_compact}
\label{lem:kills-compacts}Let $\rho:\mathcal{T}_{I}\to\mathcal{B}(H)$
be a singular representation, and let $\phi:\mathcal{T}_{I}\to\mathcal{B}(H)$
be a UCP map such that $\phi|_{\cS}=\rho|_{\cS}.$ Then $\phi$ annihilates the compact operators.
\end{lemma}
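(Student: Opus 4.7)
The plan is to show that every compact operator lies in the norm closure of $\spn\{A_1 A_2^*\mid A_1,A_2\in\cA_I\}$, and then invoke Lemma \ref{lem:agree} together with the fact that $\rho$ annihilates $\cK$. Once this containment is established the conclusion is immediate: for every $K\in\cK$, Lemma \ref{lem:agree} gives $\phi(K)=\rho(K)$, while singularity of $\rho$ gives $\rho(K)=0$.

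The crux is therefore an explicit construction of a dense set of rank-one operators inside $\spn\{A_1A_2^*\}$. From the identity recorded in the proof of Lemma \ref{lem:top-right-corner},
\[
I-\sum_{i=1}^{d} S_i S_i^*=P_{\cF_I}P_{\mb{C}}|_{\cF_I},
\]
which is the positive rank-one operator $u\otimes u^*$ for $u:=P_{\cF_I}(1)$. Since $\overline{I}$ is invariant under each $M_{z_i}$, the subspace $\cF_I$ is co-invariant for $M_{z_i}$, and a short induction gives $S^\alpha u=P_{\cF_I}(z^\alpha)$ for every multi-index $\alpha$. Because polynomials are norm-dense in $H^2_d$, the set $\{P_{\cF_I}(z^\alpha)\}_\alpha$ is total in $\cF_I$.

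Next, for any multi-indices $\alpha,\beta$, the expansion
\[
S^\alpha\Bigl(I-\sum_{i=1}^{d} S_i S_i^*\Bigr)(S^\beta)^*=S^\alpha(S^\beta)^*-\sum_{i=1}^{d}(S^\alpha S_i)(S^\beta S_i)^*
\]
exhibits the rank-one operator $P_{\cF_I}(z^\alpha)\otimes P_{\cF_I}(z^\beta)^*$ as an element of $\spn\{A_1A_2^*\}$. Bilinear approximation in the two tensor slots then places every finite-rank operator on $\cF_I$ in the norm closure of $\spn\{A_1A_2^*\}$, and hence all of $\cK$ by density of the finite-rank operators.

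I do not anticipate a substantive obstacle. The lemma is essentially a corollary of Lemma \ref{lem:agree} combined with the observation (already behind the standing fact $\cK\subseteq\cT_I$) that $\cK$ is generated inside $\cT_I$ by the single element $I-\sum_i S_i S_i^*$, which is manifestly a difference of products of the form $A_1 A_2^*$; the only mild care needed is the verification $S^\alpha u=P_{\cF_I}(z^\alpha)$, which uses nothing beyond co-invariance of $\cF_I$ and the ideal property of $I$.
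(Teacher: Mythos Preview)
Your proposal is correct and follows essentially the same approach as the paper: show that $\cK$ lies in the norm closure of $\spn\{A_1A_2^*\mid A_1,A_2\in\cA_I\}$ by exhibiting the rank-one operator $I-\sum_i S_iS_i^*$ and then conjugating to produce a total family of rank-one operators, after which Lemma~\ref{lem:agree} and singularity of $\rho$ finish the argument. The only cosmetic difference is that the paper first establishes $\cK\subseteq\overline{\cA_d\cA_d^*}$ at the level of the full shift (using cyclicity of $\mb{C}$ for $\cA_d$) and then compresses to $\cF_I$, whereas you work directly inside $\cA_I$ via the identity $S^\alpha u=P_{\cF_I}(z^\alpha)$; both routes are equally short.
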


\begin{proof}
First, we require the fact that the closure of the set
\[
\cA_I \cA_I^* = \operatorname{span}\{A_{1}A_{2}^{*}\mid A_{1},A_{2}\in\mathcal{A}_I\}
\]
contains the compact operators (this fact is appears in \cite[Theorem 1.3]{Popescu06} under some additional assumptions, but since we require this fact in slightly greater generality, we provide a proof). 
To see this, write the rank one projection $P_\mathbb{C}$ onto $\mathbb{C} = E_0$ as
\[
P_\mathbb{C} = I - \sum_{i = 1}^d M_{z_i} M_{z_i}^*,
\]
and observe that, in particular, $P_\mathbb{C}$ is contained in $\cA_d \cA_d^*$. Since the subspace $\mathbb{C}$ is cyclic for $\cA_d$, it follows that $\cA_d \cA_d^*$ contains enough rank one operators. Hence
$\cA_I \cA_I^* = P_{\cF_I} \overline{\cA_d \cA_d^*} |_{\cF_I}$ contains every rank one operator on $\cF_I$, and it follows that the closure of this set contains the compacts.

Now, since  $\phi |_{\cS} = \rho |_{\cS}$, Lemma \ref{lem:agree} implies that $\phi(T) = \rho(T)$ for every $T$ in the closure of $\cA_I \cA_I^*$. From above, this set contains the compact operators. Since $\rho$ is singular and annihilates the compact operators, it follows that $\phi$ does as well.
\end{proof}

\begin{lemma}
\label{lem:dilation-singular}Let $\rho:\mathcal{T}_{I}\to\mathcal{B}(H)$
be a singular representation, and let $\pi:\mathcal{T}_{I}\to\mathcal{B}(K)$
be a representation such that $\pi|_{\cS}$ is a dilation
of $\rho|_{\cS}$. Then there is a subrepresentation $\pi_s:\mathcal{T}_{I}\to\mathcal{B}(K_s)$
of $\pi$ such that $\pi_s$ is singular and
$\pi_s|_{\cS}$ is a dilation of $\rho|_{\cS}$.\end{lemma}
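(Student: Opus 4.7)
The plan is to exploit the canonical decomposition of a representation of a C*-algebra containing the compact operators, together with Lemma \ref{lem:ann_compact}, to force $H$ to sit inside the singular part of $\pi$.

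First I would decompose $K = K_a \oplus K_s$, where $K_a = \overline{\pi(\cK) K}$ and $K_s = K_a^\perp$. Since $\cK$ is a closed two-sided ideal of $\cT_I$, this decomposition reduces $\pi$ to a direct sum $\pi = \pi_a \oplus \pi_s$. By construction $\pi_s := \pi|_{K_s}$ annihilates $\cK$ and is therefore singular, while $\pi_a := \pi|_{K_a}$ is non-degenerate on $\cK$ (and, by simplicity of $\cK$, faithful there).

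Next I would look at the UCP map $\phi : \cT_I \to \cB(H)$ defined by $\phi(T) = P_H \pi(T)|_H$, where I identify $H$ with its image in $K$. Since $\pi|_\cS$ dilates $\rho|_\cS$, we have $\phi|_\cS = \rho|_\cS$, and Lemma \ref{lem:ann_compact} then yields $\phi(\cK) = \{0\}$. Writing $V_a : H \to K_a$ for the restriction to $H$ of the orthogonal projection onto $K_a$, the vanishing of $\phi$ on $\cK$ reads $V_a^* \pi_a(L^*L) V_a = 0$ for every $L \in \cK$ (the other summand already annihilates $\cK$). This identity is just $\|\pi_a(L) V_a\|^2 = 0$, so $\pi_a(L) V_a = 0$ for every $L \in \cK$. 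Non-degeneracy of $\pi_a|_\cK$ then forces $V_a = 0$; that is, $H \subseteq K_s$.

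Once $H \subseteq K_s$ is established, the subrepresentation $\pi_s$ is patently singular, and compressing it to $H$ returns the same map as compressing $\pi$ to $H$, namely $\rho$ on $\cS$. I do not anticipate a serious obstacle: the argument is essentially a bookkeeping exercise around the singular/absolutely-continuous decomposition, with the one substantive input---that compression of $\pi$ to $H$ must kill $\cK$---already encoded in Lemma \ref{lem:ann_compact}.
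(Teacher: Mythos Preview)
Your proposal is correct and follows essentially the same route as the paper: decompose $\pi$ into its singular summand $\pi_s$ and a part nondegenerate on $\cK$, invoke Lemma~\ref{lem:ann_compact} to see that the compression of $\pi$ to $H$ annihilates $\cK$, and conclude $H\subseteq K_s$. The only cosmetic difference is that the paper writes the nonsingular part explicitly as a multiple of the identity representation and argues via finite-rank projections, whereas you use the essential-subspace decomposition and nondegeneracy; these are equivalent formulations of the same step.
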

\begin{proof}
By standard facts about representations of C*-algebras, we can decompose $\pi=\mathrm{id}^{(\alpha)}\oplus\pi_s$, where $\mathrm{id}^{(\alpha)}$
is a multiple of the identity representation and $\pi_s:\mathcal{T}_{I}\to\mathcal{B}(K_s)$ is a singular representation.
The subspace
$K_s$ can be written as
\[
(K_s)^{\perp} = \vee \operatorname{Ran} \pi(Q),
\]
where the join is taken over all finite rank projections $Q$. By Lemma \ref{lem:kills-compacts}, $P_H \pi(Q) |_H = 0$ for every such $Q$. Thus $P_{H}(K_s)^{\perp}=0$, which implies
$H\subseteq K_s$, and hence that $\pi_s|_{\cS}$ is a dilation of $\rho|_{\cS}$, as required.
\end{proof}

\begin{proposition}
\label{prop:maximal-singular-dilation}Let $\rho:\mathcal{T}_{I}\to\mathcal{B}(H)$
be a singular representation. Then there is a singular representation $\pi:\mathcal{T}_{I}\to\mathcal{B}(K)$
such that $\pi|_{\cS}$ is a dilation of $\rho|_{\cS}$, and such that $\pi|_{\cS}$ has the unique extension property.
Moreover, the image
$\pi(\mathcal{T}_{I})$ is commutative.
\end{proposition}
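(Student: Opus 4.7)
The plan is to first produce a singular $*$-representation $\pi$ of $\cT_I$ whose restriction to $\cS$ dilates $\rho|_\cS$ and has the unique extension property (UEP), and then to show that the image is automatically commutative by realizing $\pi$ as a subrepresentation of a $*$-representation that factors through $\cT_I/J \cong C(\partial V)$.

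For the first task, I would invoke the Dritschel--McCullough dilation theorem: starting from the UCP map $\rho|_\cS$, produce a maximal UCP dilation, which extends uniquely to a $*$-representation $\tilde\pi : \cT_I \to \cB(K)$ such that $\tilde\pi|_\cS$ dilates $\rho|_\cS$ and has the UEP. Applying Lemma~\ref{lem:dilation-singular} to $\tilde\pi$ decomposes $\tilde\pi = \mathrm{id}^{(\alpha)} \oplus \pi$, with $\pi : \cT_I \to \cB(K_s)$ singular and $\pi|_\cS$ still a dilation of $\rho|_\cS$. That the UEP descends to $\pi|_\cS$ is routine: any UCP extension $\phi$ of $\pi|_\cS$ yields a UCP extension $\mathrm{id}^{(\alpha)} \oplus \phi$ of $\tilde\pi|_\cS$, which by the UEP of $\tilde\pi|_\cS$ must coincide with $\tilde\pi$, forcing $\phi = \pi$.

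To establish commutativity of $\pi(\cT_I)$, I would reuse the mechanism from the proof of Theorem~\ref{thm:main}. Since $\pi$ is singular, $\sum \pi(S_i)\pi(S_i)^* = I$, so by Athavale's theorem $(\pi(S_1)^*, \ldots, \pi(S_d)^*)$ is subnormal with a commuting normal extension $N^* = (N_1^*, \ldots, N_d^*)$ on some $\tilde K \supseteq K_s$. By Putinar's spectral inclusion theorem together with Proposition~\ref{prop:specSspecZ}, the joint spectrum of $N$ is contained in $\overline V \cap \partial\Bd = \partial V$. Composing the quotient $\cT_I \to \cT_I/J \cong C(\partial V)$ from Proposition~\ref{prop:CS} with the functional calculus for $N$ on $C(\partial V)$ produces a $*$-representation $\tau : \cT_I \to \cB(\tilde K)$ with commutative image and $\tau(S_i) = N_i$. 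Subnormality makes $K_s$ invariant under each $N_i^*$, so the resulting upper-triangular form of $N_i$ gives $P_{K_s}\tau(T)P_{K_s} = \pi(T)$ for every $T \in \cS$. The UEP of $\pi|_\cS$ then forces the equality $P_{K_s}\tau(T)P_{K_s} = \pi(T)$ to persist for all $T \in \cT_I$, and a standard multiplicativity argument (comparing $P_{K_s}\tau(T_1 T_2)P_{K_s}$ with $P_{K_s}\tau(T_1)P_{K_s}\tau(T_2)P_{K_s}$ and specializing $T_2 = I$) shows that $K_s$ reduces $\tau$. Hence $\pi$ is a subrepresentation of $\tau$, and its image inherits commutativity.

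The main obstacle is the commutativity step: one must notice that Athavale's normal extension can be promoted, via the abelian quotient $\cT_I/J \cong C(\partial V)$ and functional calculus for a commuting normal tuple with spectrum in $\partial V$, to a genuine $*$-representation of $\cT_I$ on the extension space, so that the UEP collapses $\pi$ onto this commutative model. The earlier dilation and descent to the singular part are more routine applications of Dritschel--McCullough together with Lemma~\ref{lem:dilation-singular}.
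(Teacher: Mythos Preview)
Your proposal is correct. The first half---producing a maximal dilation via Dritschel--McCullough (the paper cites the equivalent \cite{Arv08}), then passing to the singular summand via Lemma~\ref{lem:dilation-singular} and observing that the unique extension property descends to direct summands---matches the paper's argument exactly.

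For the commutativity of $\pi(\cT_I)$ you take a genuinely different route. The paper argues abstractly: since $\pi$ is singular it factors as $\pi_0\circ q$ through $\cO_I$, the UEP of $\pi|_\cS$ passes to $\pi_0|_{\cS/\cK}$, and then Dritschel--McCullough's result that a representation with the UEP factors through the C*-envelope, together with Theorem~\ref{thm:main} identifying $C^*_e(\cS/\cK)=C(\partial V)$, immediately gives commutativity of the image. You instead build the commutative model by hand: Athavale and Putinar give a normal extension $N$ with $\sigma(N)\subseteq\partial V$, and you promote $N$ to a $*$-representation $\tau$ of $\cT_I$ by routing through the abelian quotient $\cT_I/J\cong C(\partial V)$ and the functional calculus; then UEP forces the compression $P_{K_s}\tau(\cdot)P_{K_s}$ to equal $\pi$, and the standard fact that a multiplicative compression of a $*$-representation is a subrepresentation finishes the job. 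Your approach is more concrete and avoids invoking the ``factors through the C*-envelope'' machinery, at the cost of essentially re-deriving the content of Theorem~\ref{thm:main} in situ; the paper's approach is shorter precisely because it cashes in that theorem. One small quibble: the parenthetical hint ``specializing $T_2=I$'' in your reducing-subspace argument yields a triviality; the correct specialization is $T_2=T_1^*$ (and then $T_1=T_2^*$), which gives $(I-P_{K_s})\tau(T)^*P_{K_s}=0$ and its adjoint counterpart.
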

\begin{proof}
The existence of a representation $\pi:\mathcal{T}_{I}\to\mathcal{B}(K)$ such that $\pi|_{\cS}$ is a dilation of $\rho|_{\cS}$, and such that $\pi|_{\cS}$ has the unique extension property is implied by the results in \cite{Arv08} (see also \cite{DK13}). It is easy to see that if $\pi$ has the unique extension property when restricted to $\cS$, then every subrepresentation also has this property. Hence by Lemma \ref{lem:dilation-singular},
we can suppose that $\pi$ is singular.

Since $\pi$ annihilates the compacts, we can factor $\pi = \pi_0 \circ q$, where $\pi_0:\cO_I \to \cB(K)$ is a $*$-representation and $q:\cB(\cF_I) \to \cB(\cF_I)/\cK$ denotes the quotient map onto the Calkin algebra. 
Let $\cZ = \cS / \cK = \operatorname{Span}\{ I, Z_1, Z_1^*, \ldots, Z_d, Z_d^*\}$. 
We claim that $\pi_0 |_\cZ$ also has the unique extension property. To see this, let $\phi:\cO_I \to \cB(K)$ be a UCP map such that $\phi |_\cZ = \pi_0 |_\cZ$. 
Then $(\phi \circ q) |_\cS = \pi |_\cS$. Since $\pi |_\cS$ has the unique extension property, it follows that $\phi \circ q = \pi = \pi_0 \circ q$, and hence that $\phi = \pi_0$.

Dritschel and McCullough \cite{DM05} showed that since $\pi_0 |_\cZ$ has the unique extension property, the unique UCP extension $\pi_0$ is a $*$-representation that factors through the C*-envelope of $\cZ$, which by Theorem \ref{thm:main} is $C(\partial V)$. 
In particular, the image $\pi_0(\cO_I) = \pi(\cT_I)$ is commutative. 
\end{proof}

\subsection{Hyperrigidity and essential normality}

\begin{lemma}\label{lem:ENiffCPT}
The $d$-tuple $S$ is essentially normal if and only if $I - \sum_{i=1}^d S_i^* S_i$ is compact. 
\end{lemma}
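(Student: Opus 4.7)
The plan is to handle the two implications separately. The forward direction should be immediate from the identity $I - \sum_{i=1}^d S_i S_i^* = P_{\mathbb{C}}|_{\cF_I}$ established in the proof of Proposition~\ref{prop:spectra} (a rank-one projection, hence compact). Essential normality gives $\sum_i [S_i, S_i^*] \in \cK$, and the two compact quantities combine via
$$I - \sum_{i=1}^d S_i^* S_i = \Bigl(I - \sum_{i=1}^d S_i S_i^*\Bigr) + \sum_{i=1}^d [S_i, S_i^*]$$
to show $I - \sum S_i^* S_i \in \cK$.

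For the reverse direction I would pass everything to the Cuntz algebra via $Z_i = \pi(S_i)$. The hypothesis translates to $\sum Z_i^* Z_i = 1$, and since $I - \sum S_i S_i^*$ is automatically compact, also $\sum Z_i Z_i^* = 1$ in $\cO_I$; consequently $\sum_i(Z_i Z_i^* - Z_i^* Z_i) = 0$. The real task is to upgrade this single scalar identity to the individual vanishings $[Z_i, Z_j^*] = 0$ for all $i, j$, which will lift to essential normality of $S$.

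The mechanism will be Theorem~\ref{thm:main}(1): in a faithful representation of $\cO_I$ on a Hilbert space $\cH$, the tuple $(Z_1^*, \ldots, Z_d^*)$ is subnormal, so there is a commuting normal extension $(N_1, \ldots, N_d)$ on some $K \supseteq \cH$ with $\cH$ invariant under each $N_i$ and $N_i|_{\cH} = Z_i^*$. Writing each $N_i$ in block form with respect to $K = \cH \oplus \cH^\perp$ as
$$N_i = \begin{pmatrix} Z_i^* & A_i \\ 0 & B_i \end{pmatrix},$$
and using that commuting normals are jointly normal, the identity $N_i^* N_j = N_j N_i^*$ reveals at its $(1,1)$ block the clean formula
$$Z_i Z_j^* - Z_j^* Z_i = A_j A_i^*.$$
The rest is mechanical: specializing $i = j$ gives $A_i A_i^* = Z_i Z_i^* - Z_i^* Z_i \geq 0$, and summing forces $\sum_i A_i A_i^* = 0$; since positive operators summing to zero must each vanish, every $A_i = 0$, so $[Z_i, Z_j^*] = A_j A_i^* = 0$ for all $i, j$, lifting to $[S_i, S_j^*] \in \cK$.

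The only substantive input will be the appeal to Theorem~\ref{thm:main}(1), itself an application of Athavale's spherical subnormality theorem; everything else is elementary $2 \times 2$ block arithmetic plus the basic fact about sums of positives. The pleasant feature of the argument is that the scalar identity $\sum Z_i Z_i^* = \sum Z_i^* Z_i$, combined with joint normality of the extension, simultaneously kills both the diagonal commutators $[Z_i, Z_i^*]$ and the off-diagonal ones $[Z_i, Z_j^*]$ via the common vanishing of the defect operators $A_i$.
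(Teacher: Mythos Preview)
Your proof is correct, and the forward direction matches the paper's. For the converse you take a genuinely different route. The paper argues directly: from $S_i^* = M_{z_i}^*|_{\cF_I}$ one checks that $[S_i,S_i^*] \geq P_{\cF_I}[M_{z_i},M_{z_i}^*]|_{\cF_I}$, and essential normality of the full $d$-shift $M_z$ then gives $\pi([S_i,S_i^*]) \geq 0$; since these positive elements sum to zero each $Z_i$ is normal, and Fuglede's theorem handles the off-diagonal commutators. Your argument instead invokes Theorem~\ref{thm:main}(1) (hence Athavale's spherical subnormality theorem) to produce a normal extension $N$ of $Z^*$, and reads off $[Z_i,Z_j^*] = A_j A_i^*$ from the $(1,1)$ block of $N_i^* N_j = N_j N_i^*$; the scalar identity $\sum A_iA_i^* = 0$ then kills all commutators at once. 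Both approaches appeal to Fuglede (yours implicitly, when asserting $N_i^* N_j = N_j N_i^*$ for commuting normals). The paper's route is more self-contained, using only the elementary essential normality of $M_z$ rather than the deeper Athavale result; the payoff of your approach is that the diagonal and off-diagonal cases fall out simultaneously from the single vanishing $A_i = 0$, without a separate invocation of Fuglede at the end.
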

\begin{proof}
We have already noted that $I - \sum S_i S_i^* \in \cK$. Thus if $S$ is essentially normal then $I-\sum S_i^*S_i$ is equal to $I- \sum S_i S_i^*$ mod compacts, so is compact. 

Conversely, if $I - \sum S^*_i S_i \in \cK$ then $\sum_{i=1}^d [S_i^*, S_i] \in \cK$. 
Using the essential normality of the $d$-shift $M_z$, one checks that $\pi([S_i,S_i^*]) \geq 0$ for all $i$. It follows that $\pi([S_i,S_i^*]) = 0$ for all $i$. From Fuglede's theorem is it follows that $\pi([S_i,S_j^*]) = 0$ for all $i,j$, as required.  
\end{proof}

\begin{proposition}
\label{prop:sum-to-1-ess-norm} Suppose that $S$ is essentially normal, and let $\rho:\mathcal{T}_{I}\to\mathcal{B}(H)$
be a singular representation. 
Then the restriction $\rho|_{\cS}$ has the unique extension property.
\end{proposition}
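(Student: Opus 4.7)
The plan is to show that any UCP extension $\phi:\cT_I\to\cB(H)$ of $\rho|_{\cS}$ must equal $\rho$. The essential normality hypothesis will enter through Lemma \ref{lem:ENiffCPT} and a multiplicative-domain argument.

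First I would apply Lemma \ref{lem:ann_compact} to conclude that any such $\phi$ annihilates $\cK$, so $\phi$ factors as $\phi = \tilde{\phi}\circ\pi$ for a UCP map $\tilde{\phi}:\cO_I\to\cB(H)$; similarly $\rho = \tilde{\rho}\circ\pi$ with $\tilde{\rho}$ a $*$-representation. Writing $Z_i = \pi(S_i)$, we have $\tilde{\phi}(Z_i) = \phi(S_i) = \rho(S_i) = \tilde{\rho}(Z_i)$ and likewise for $Z_i^*$. It therefore suffices to prove $\tilde{\phi} = \tilde{\rho}$, and for this I will show that each $Z_i$ lies in the multiplicative domain of $\tilde{\phi}$: once that is established, the multiplicative domain is a C*-subalgebra on which $\tilde{\phi}$ is a $*$-homomorphism, and since $\{Z_i\}$ generates $\cO_I$, we get $\tilde{\phi} = \tilde{\rho}$ on all of $\cO_I$.

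To place $Z_i$ in the multiplicative domain I need the two identities $\tilde{\phi}(Z_i^*Z_i) = \tilde{\phi}(Z_i)^*\tilde{\phi}(Z_i)$ and $\tilde{\phi}(Z_iZ_i^*) = \tilde{\phi}(Z_i)\tilde{\phi}(Z_i)^*$. Here essential normality is crucial: combined with $I-\sum S_iS_i^* = P_{\mb{C}}\in\cK$ and Lemma \ref{lem:ENiffCPT}, it gives both $\sum Z_iZ_i^* = I$ and $\sum Z_i^*Z_i = I$ in $\cO_I$. Applying Schwarz's inequality termwise and then summing,
\[
I = \tilde{\phi}(I) = \sum_{i=1}^d \tilde{\phi}(Z_i^*Z_i) \;\geq\; \sum_{i=1}^d \tilde{\phi}(Z_i)^*\tilde{\phi}(Z_i) = \sum_{i=1}^d \tilde{\rho}(Z_i)^*\tilde{\rho}(Z_i) = \tilde{\rho}\Bigl(\sum_{i=1}^d Z_i^*Z_i\Bigr) = I,
\]
so equality holds throughout, and since each summand $\tilde{\phi}(Z_i^*Z_i) - \tilde{\phi}(Z_i)^*\tilde{\phi}(Z_i)$ is positive, each must vanish. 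The identical argument using $\sum Z_iZ_i^* = I$ produces $\tilde{\phi}(Z_iZ_i^*) = \tilde{\phi}(Z_i)\tilde{\phi}(Z_i)^*$. Hence each $Z_i$ is in the multiplicative domain of $\tilde{\phi}$, completing the proof.

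The only delicate point, and the reason essential normality is genuinely used, is that without it one only has $\sum Z_iZ_i^*=I$ but not $\sum Z_i^*Z_i = I$; the former identity alone forces equality in Schwarz only for the $Z_iZ_i^*$ direction, which would not suffice to put $Z_i$ into the multiplicative domain. The two-sided row-isometry behaviour of $Z$ modulo the compacts, which is precisely the content of essential normality, is what drives the argument through.
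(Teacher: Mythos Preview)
Your proof is correct and complete. It differs genuinely from the paper's argument, though both hinge on the same key consequence of essential normality, namely that $\sum Z_i^*Z_i = I$ in $\cO_I$ (via Lemma~\ref{lem:ENiffCPT}).

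The paper proceeds via the maximality characterization of the unique extension property: it takes an arbitrary \emph{representation} $\pi$ dilating $\rho|_\cS$, uses Lemma~\ref{lem:top-right-corner} to make $H$ coinvariant, invokes Lemma~\ref{lem:dilation-singular} to reduce to the case where $\pi$ is singular, and then computes directly that the off-diagonal blocks $Y_i$ vanish because $\sum \pi(S_i^*S_i)=I$ and $\sum\rho(S_i^*S_i)=I$. You instead work with an arbitrary UCP extension $\phi$, pass to the quotient $\cO_I$ via Lemma~\ref{lem:ann_compact}, and run a multiplicative-domain argument: the two identities $\sum Z_i Z_i^*=I$ and $\sum Z_i^*Z_i=I$ force equality in the Kadison--Schwarz inequality for each $Z_i$ in both directions, hence each $Z_i$ lies in the multiplicative domain, and since the $Z_i$ generate $\cO_I$ you conclude $\tilde\phi=\tilde\rho$. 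Your route is arguably more streamlined---it avoids Stinespring dilations and the maximality formalism entirely---while the paper's dilation-theoretic proof sits more naturally within its surrounding discussion of the noncommutative Choquet boundary and maximal UCP maps.
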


\begin{proof}
We will use the fact that a UCP  map $\theta$ has the unique extension property if and only if it is {\em maximal}, meaning that every UCP map that dilates $\theta$ contains it as a direct summand \cite[Proposition 2.4]{Arv08}. 
Let $K$ be a Hilbert space properly containing $H$,
and let $\pi:\mathcal{T}_{I}\to\mathcal{B}(K)$ be a representation
such that the restriction $\pi|_{\cS}$ is a dilation of $\rho|_{\cS}$.
We need to show that this dilation is trivial, i.e. that $\pi|_{\cS}=\rho|_{\cS}\oplus\phi$
for some UCP map $\phi$. 

By Lemma \ref{lem:top-right-corner}, we
can decompose $K=H\oplus H^{\perp}$
and write
\[
\pi(S_{i})=\left(\begin{array}{cc}
\rho(S_{i}) & 0\\
Y_{i} & Z_{i}
\end{array}\right),\quad1\leq i\leq d.
\]
Showing that $\pi|_{\cS}$ is a trivial dilation of $\rho|_{\cS}$ is equivalent
to showing that $Y_{i}=0$ for each $i$.

As in the proof of Lemma \ref{prop:maximal-singular-dilation}, we can apply Lemma \ref{lem:dilation-singular} and suppose that $\pi$
is singular. 
By Lemma \ref{lem:ENiffCPT}, $I - \sum S_i^* S_i \in \cK$. From singularity, 
\[
\sum_{i=1}^{d}\pi(S_{i}^{*}S_{i}) = I_K \,\,\,\, , \,\,\,\, \sum_{i=1}^{d}\rho(S_{i}^{*}S_{i}) = I_H \, ,
\]
which implies
\begin{eqnarray*}
I & = & \sum_{i=1}^{d}P_{H}\pi(S_{i}^{*}S_{i})|_{H}\\
 & = & \sum_{i=1}^{d}(\rho(S_{i}^{*}S_{i})+Y_{i}^{*}Y_{i})\\
 & = & I+\sum_{i=1}^{d}Y_{i}^{*}Y_{i}.
\end{eqnarray*}
Therefore, $\sum_{i=1}^{d}Y_{i}^{*}Y_{i}=0$, and hence $Y_{i}=0$
for each $i$.
\end{proof}

\begin{proposition}
\label{prop:irr-rep-bdy-rep}
Suppose that $S$ is essentially normal. 
Then every irreducible representation of $\cT_I$ is a boundary representation for $\cS$. 
\end{proposition}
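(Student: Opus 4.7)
The approach is to classify the irreducible representations of $\cT_I$ and verify the boundary property in each case. Since $\cK \subseteq \cT_I$ is a two-sided ideal and $\cK$ has a unique irreducible representation (the identity on $\cF_I$) up to unitary equivalence, every irreducible representation $\pi$ of $\cT_I$ is either singular ($\pi(\cK) = 0$) or unitarily equivalent to the identity representation $\mathrm{id} \colon \cT_I \to \cB(\cF_I)$.

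For the singular case, Proposition \ref{prop:sum-to-1-ess-norm} gives directly that $\pi|_{\cS}$ has the unique extension property under essential normality, so such $\pi$ is a boundary representation.

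It remains to show that $\mathrm{id}$ itself is a boundary representation for $\cS$. The plan is to take any UCP map $\phi \colon \cT_I \to \cB(\cF_I)$ with $\phi|_{\cS} = \mathrm{id}|_{\cS}$, form a minimal Stinespring dilation $\sigma \colon \cT_I \to \cB(K)$ with isometric inclusion $V \colon \cF_I \hookrightarrow K$, and decompose $\sigma = \sigma_c \oplus \sigma_s$ into the summand $\sigma_c$ containing the compacts (necessarily a multiple of $\mathrm{id}$) and a singular summand $\sigma_s$, with corresponding splitting $V = V_c \oplus V_s$. By essential normality both $I - \sum_i S_i^* S_i$ and $I - \sum_i S_i S_i^* = P_{\cF_I} P_{\mathbb{C}} |_{\cF_I}$ lie in $\cK$ (Lemma \ref{lem:ENiffCPT} and the identity used in the proof of Lemma \ref{lem:top-right-corner}), so $\sigma_s$ annihilates them. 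Adapting the Kadison-Schwarz computation of Proposition \ref{prop:sum-to-1-ess-norm}, by comparing the $(\cF_I, \cF_I)$-entries of $\sigma(\sum_i S_i^* S_i)$ against $\sum_i \sigma(S_i)^* \sigma(S_i)$, and running the analogous computation with $\sum_i S_i S_i^*$ (whose defect from $I$ is a rank-one compact projection), I would extract positive-defect identities that force $V_s = 0$ and $V_c$ to be a trivial embedding of the form $I \otimes v$ for some unit vector $v$; a multiplicative-domain argument then promotes $\phi|_{\cS} = \mathrm{id}|_{\cS}$ to the identity $\phi = \mathrm{id}$ on all of $\cT_I$.

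The main obstacle is that $I - \sum_i S_i^* S_i$ is generally not a positive operator, so the positivity arguments available in the singular case do not transport directly, and the two compact defects must be balanced against one another. A conceptually cleaner alternative is to invoke triviality of the Shilov boundary ideal of $\cA_I$ in $\cT_I$ (Theorem 10.4 of \cite{DRS} in the homogeneous setting, which can be extended to the present setup): if all boundary representations for $\cS$ were singular, their kernels would jointly contain $\cK$, so the Shilov ideal would contain $\cK \neq 0$, a contradiction; hence by the classification above, $\mathrm{id}$ must also be a boundary representation.
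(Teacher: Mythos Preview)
Your classification of irreducible representations and the treatment of the singular case match the paper exactly. The divergence is in handling the identity representation.

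The paper simply cites \cite[Proposition 6.4.6]{ChenGuo}, which asserts directly that the restriction of the identity representation of $\cT_I$ to $\cS$ is irreducible and has the unique extension property. That is the entire argument for the non-singular case.

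Your first approach via a Stinespring decomposition is, as you yourself note, incomplete: the defect $I - \sum_i S_i^* S_i$ is compact but need not be positive, so the inequality trick from Proposition~\ref{prop:sum-to-1-ess-norm} does not carry over, and you do not actually supply the ``balancing'' argument you allude to. As written, this portion is a plan rather than a proof.

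Your second approach via triviality of the Shilov ideal is logically sound: if every boundary representation were singular, the Shilov ideal would contain $\cK$, contradicting \cite[Theorem 10.4]{DRS}. This is a correct and rather elegant deduction. Note, however, that it is essentially circular relative to the paper's citation: the statement that the Shilov ideal of $\cA_I$ in $\cT_I$ is trivial and the statement that the identity representation is a boundary representation are, given the dichotomy you have already established, equivalent reformulations of one another. You are trading one external input for another of the same strength. Moreover, \cite[Theorem 10.4]{DRS} is stated for homogeneous ideals, and your parenthetical ``which can be extended to the present setup'' is not justified; the paper's reference to \cite{ChenGuo} covers the more general standing assumptions directly.

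In short: the proof becomes a one-line citation once you invoke \cite[Proposition 6.4.6]{ChenGuo}; your detours either leave a gap or reduce to an equivalent external fact.
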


\begin{proof}
By \cite[Proposition 6.4.6]{ChenGuo}, the restriction of the identity representation of $\cT_I$ to $\cS$ is irreducible, and has the unique extension property. 
Since every $*$-representation of $\cT_I$ splits
as the direct sum of a multiple of the identity representation and a singular representation, it follows that the identity representation is the only irreducible non-singular representation of $\cT_I$. 
By Proposition \ref{prop:sum-to-1-ess-norm}, the restriction of every irreducible singular representation of $\cT_I$ to $\cS$ also has the unique extension property.
\end{proof}

\begin{theorem}
\label{thm:main-hr}
The $d$-tuple $S=(S_{1},\ldots,S_{d})$ is essentially normal if and only if it is hyperrigid.
\end{theorem}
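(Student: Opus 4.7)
The plan is to prove the two implications separately, with the reverse direction being the genuinely new content.

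For ($\Leftarrow$), essential normality implies hyperrigidity: the goal is to show every nondegenerate representation of $\cT_I$ has the unique extension property when restricted to $\cS$. Since $\cT_I \supseteq \cK$, any such $\pi$ splits as $\pi = \operatorname{id}^{(\alpha)} \oplus \pi_s$ with $\pi_s$ singular. Proposition \ref{prop:sum-to-1-ess-norm} already provides UEP for every singular representation under the essential normality hypothesis, and \cite[Proposition 6.4.6]{ChenGuo} provides it for the identity representation. So the remaining ingredient is a general direct-sum closure lemma: if $\pi_1$ and $\pi_2$ both have UEP, so does $\pi_1 \oplus \pi_2$. I would prove this by the multiplicative-domain technique. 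Let $\phi$ be a UCP extension of $(\pi_1 \oplus \pi_2)|_\cS$ to $\cT_I$. Compressing to each summand gives UCP maps $\phi_{ii}$ with $\phi_{ii}|_\cS = \pi_i|_\cS$, hence $\phi_{ii} = \pi_i$ on all of $\cT_I$ by UEP. For $s \in \cS$, Kadison--Schwarz yields $\phi(s^*s) - \phi(s)^*\phi(s) \geq 0$; its diagonal corners vanish because the $\phi_{ii}$ are $*$-homomorphisms, and a positive block operator with vanishing diagonal corners must vanish. Hence $\cS$ lies in the multiplicative domain of $\phi$; since $\cS$ generates $\cT_I$, $\phi$ is a $*$-homomorphism. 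Matching $(1,1)$-blocks in $\phi(T^*T) = \phi(T)^*\phi(T)$ then forces the off-diagonal blocks $\phi_{12}, \phi_{21}$ to vanish on all of $\cT_I$, so $\phi = \pi_1 \oplus \pi_2$.

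For ($\Rightarrow$), hyperrigidity implies essential normality: by Lemma \ref{lem:ENiffCPT} it suffices to prove $A := I - \sum_{i=1}^d S_i^* S_i \in \cK$. Because singular representations of $\cT_I$ are precisely $*$-representations of $\cO_I = \cT_I/\cK$, the intersection of their kernels is $\cK$, so the task reduces to showing $\rho(A) = 0$ for every singular $\rho$. Fix such a $\rho$. Hyperrigidity gives that $\rho|_\cS$ has UEP, hence is maximal as a UCP map. Apply Proposition \ref{prop:maximal-singular-dilation} to obtain a singular $*$-representation $\pi$ on some $K \supseteq H$ such that $\pi|_\cS$ dilates $\rho|_\cS$, $\pi|_\cS$ has UEP, and $\pi(\cT_I)$ is commutative. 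Maximality of $\rho|_\cS$ implies that $H$ is reducing for $\pi(\cS)$, hence (being a $*$-subalgebra) for the generated $\pi(\cT_I)$; the subrepresentation $\pi|_H$ agrees with $\rho$ on $\cS$, and so by UEP of $\rho|_\cS$ agrees with $\rho$ throughout $\cT_I$. Thus $\rho(\cT_I) \subseteq \pi(\cT_I)$ is commutative, so $\rho(S_1), \ldots, \rho(S_d)$ are commuting normal operators. Singularity gives $\sum_i \rho(S_i)\rho(S_i)^* = I$, and normality upgrades this to $\sum_i \rho(S_i)^*\rho(S_i) = I$, i.e., $\rho(A) = 0$.

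The conceptual heart of the argument is the commutativity assertion in Proposition \ref{prop:maximal-singular-dilation}, which rests in turn on the C*-envelope identification of Theorem \ref{thm:main}. The only modest technical wrinkle is the direct-sum UEP lemma in the forward direction; it could alternatively be absorbed by invoking the equivalent formulations of hyperrigidity in \cite[Theorem 2.1]{Arv11}.
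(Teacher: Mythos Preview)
Your proof is correct (modulo the cosmetic point that your arrow labels $(\Leftarrow)$ and $(\Rightarrow)$ are swapped relative to the statement). For the direction ``essential normality $\Rightarrow$ hyperrigid'' your argument is identical to the paper's: split an arbitrary representation as $\operatorname{id}^{(\alpha)} \oplus \pi_s$, invoke Proposition~\ref{prop:sum-to-1-ess-norm} and \cite[Proposition 6.4.6]{ChenGuo}, and assemble via closure of UEP under direct sums (the paper simply asserts this closure; your multiplicative-domain sketch is a fine proof of it).

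For the direction ``hyperrigid $\Rightarrow$ essentially normal'' you take a genuinely different route. The paper argues globally at the quotient level: hyperrigidity passes to quotients \cite[Corollary 2.2]{Arv11}, so $Z = \pi(S)$ is hyperrigid in $\cO_I$, whence $C^*_e(\cZ) = C^*(Z) = \cO_I$ \cite[Corollary 4.2]{Arv11}; Theorem~\ref{thm:main} identifies $C^*_e(\cZ)$ as $C(\partial V)$, so $\cO_I$ is commutative and $S$ is essentially normal. You instead work one singular representation at a time: for each singular $\rho$, Proposition~\ref{prop:maximal-singular-dilation} supplies a singular dilation $\pi$ with commutative image, and maximality of $\rho|_\cS$ (from hyperrigidity) forces $H$ to reduce $\pi$, making $\rho$ a subrepresentation of $\pi$ with commutative image; normality of the $\rho(S_i)$ then gives $\rho(I - \sum S_i^*S_i) = 0$, and Lemma~\ref{lem:ENiffCPT} finishes. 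The paper's argument is shorter and cleaner, leaning on two external results of Arveson about hyperrigidity; yours is more self-contained, staying entirely within the paper's own machinery (Proposition~\ref{prop:maximal-singular-dilation} already encodes Theorem~\ref{thm:main}) and avoiding those citations. Both ultimately rest on the same C*-envelope identification.
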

\begin{proof}
If $S$ is hyperrigid, then since hyperrigidity passes to quotients, the image $Z$ of $S$ modulo the compacts is also hyperrigid. Hence the C*-envelope of $Z$ is $\mathrm{C}^*(Z) = \cO_I$ \cite[Corollary 4.2]{Arv11}. But we know by Theorem \ref{thm:main} that the C*-envelope of $Z$ is commutative, meaning $\cO_I$ is commutative, and hence that $S$ is essentially normal.

Conversely, if $S$ is essentially normal, then Proposition \ref{prop:sum-to-1-ess-norm} implies that the restriction
$\pi|_{\cS}$ has the unique extension property for every singular $*$-representation $\pi$. 
By \cite[Proposition 6.4.6]{ChenGuo}, the restriction of the identity representation to $\cS$ has the unique extension property. 
Since every $*$-representation of $\mathcal{T}_{I}$
splits as the direct sum of a multiple of the identity representation and a singular representation, and since the unique extension property passes to direct sums,
it follows that the restriction of every $*$-representation of $\mathcal{T}_{I}$ to $\cS$ has the unique extension property.
\end{proof}

A conjecture of Arveson \cite[Conjecture 4.3]{Arv11}, if true, would imply that $\cS$ is hyperrigid, and hence essentially normal, if and only if every irreducible representation of $\cT_I$ is a boundary representation for $\cS$. This would imply the converse of Proposition \ref{prop:irr-rep-bdy-rep}.

\subsection{The obstruction}
If $S$ is not essentially normal, then by Theorem \ref{thm:main-hr}, it is not hyperrigid. 
Hence there is a *-representation of $\cT_I$ that does not
have the unique extension property when restricted to $\cS$. 
In this section, we identify how such a map can arise.

Let $\rho:\mathcal{T}_{I}\to\mathcal{B}(H)$ be
a singular representation, and let $\pi:\mathcal{T}_{I}\to\mathcal{B}(K)$
be a representation as in Proposition \ref{prop:maximal-singular-dilation}, i.e.
a singular representation such that $\pi|_{\cS}$ is a dilation of $\rho|_{\cS}$, and such that $\pi|_{\cS}$ has the unique extension property.
Then in particular, the image $\pi(\mathcal{T}_{I})$
is commutative. By Lemma \ref{lem:top-right-corner}, we can decompose
$K=H\oplus H^{\perp}$ and write
\[
\pi(S_{i})=\left(\begin{array}{cc}
\rho(S_{i}) & 0\\
Y_{i} & Z_{i}
\end{array}\right),\quad1\leq i\leq d.
\]
Since the image $\pi(\mathcal{T}_{I})$ is commutative,
in particular we have
\[
\pi(S_{i}^{*}S_{j})=\pi(S_{j}S_{i}^{*}),\quad1\leq i,j\leq d,
\]
which implies
\[
\rho(S_{j}S_{i}^{*}) = \rho(S_{i}^{*}S_{j}) + Y_{i}^{*}Y_{j},\quad1\leq i,j\leq d,.
\]
Let $\phi:\mathcal{T}_{I}\to\mathcal{B}(H)$ denote the
UCP map defined by 
\[
\phi(A)=P_{H}\pi(A)|_{H},\quad\forall A\in\mathcal{T}_{I}.
\]
Then from above, $\phi|_{\cS}=\rho|_{\cS}$,
but
\[
\phi(S_{i}^{*}S_{j})=\rho(S_{i}^{*}S_{j})+Y_{i}^{*}Y_{j}=\rho(S_{j}S_{i}^{*}),\quad1\leq i,j\leq d.
\]
Thus we obtain the following result.

\begin{proposition}
\label{prop:strange-map}
Let $\rho:\mathcal{T}_{I}\to\mathcal{B}(H)$ be a singular representation. Then there is a UCP map $\phi:\cT_I \to \cB(H)$ such that
$\phi|_{\cS}=\rho|_{\cS}$, but
\[
\phi(S_{i}^{*}S_{j})=\rho(S_{j}S_{i}^{*}),\quad1\leq i,j\leq d.
\]
\end{proposition}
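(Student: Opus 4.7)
The proof is essentially already laid out in the paragraphs preceding the proposition, so the plan is just to package those computations into a self-contained argument.

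First I would invoke Proposition \ref{prop:maximal-singular-dilation} applied to the given singular representation $\rho:\cT_I \to \cB(H)$, obtaining a singular representation $\pi:\cT_I \to \cB(K)$ on some Hilbert space $K \supseteq H$ such that $\pi|_\cS$ is a dilation of $\rho|_\cS$ and $\pi(\cT_I)$ is commutative. Then I would define the UCP map $\phi:\cT_I \to \cB(H)$ by $\phi(A) = P_H \pi(A)|_H$; this is UCP as the compression of a $*$-representation.

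Next I would verify that $\phi$ has the required properties. By Lemma \ref{lem:top-right-corner}, since $\pi|_\cS$ dilates the singular representation $\rho|_\cS$, the subspace $H$ is coinvariant for $\pi(\cA_I)$, so with respect to $K = H \oplus H^\perp$ we may write
\[
\pi(S_i) = \begin{pmatrix} \rho(S_i) & 0 \\ Y_i & Z_i \end{pmatrix}, \quad 1 \leq i \leq d,
\]
for some operators $Y_i, Z_i$. This already ensures $\phi(S_i) = \rho(S_i)$ and $\phi(S_i^*) = \rho(S_i^*)$, hence $\phi|_\cS = \rho|_\cS$.

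For the nontrivial identity, the key input is the commutativity of $\pi(\cT_I)$, which gives $\pi(S_i^*S_j) = \pi(S_j S_i^*)$. Compressing to $H$, the right-hand side yields $P_H \pi(S_j)\pi(S_i^*)|_H = \rho(S_j)\rho(S_i^*) = \rho(S_jS_i^*)$, where the second equality uses that $\pi(S_i^*)$ leaves $H$ invariant (coinvariance of $H$ under $\pi(\cA_I)$). Therefore
\[
\phi(S_i^*S_j) = P_H \pi(S_i^*S_j)|_H = P_H \pi(S_jS_i^*)|_H = \rho(S_jS_i^*),
\]
which is the required identity. There is no real obstacle — the content of the proposition is entirely supplied by the preceding lemma and Proposition \ref{prop:maximal-singular-dilation}; the only point that requires the commutativity of $\pi(\cT_I)$ (rather than just $\pi(\cA_I)$) is the interchange of $S_i^*S_j$ with $S_jS_i^*$, and that is precisely what the maximal singular dilation provides.
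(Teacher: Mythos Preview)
Your proof is correct and follows exactly the argument the paper gives in the paragraphs immediately preceding the proposition: invoke Proposition \ref{prop:maximal-singular-dilation} to obtain the commutative-image dilation $\pi$, define $\phi$ as its compression to $H$, use Lemma \ref{lem:top-right-corner} for the coinvariance structure, and then exploit $\pi(S_i^*S_j)=\pi(S_jS_i^*)$ together with coinvariance to compute $\phi(S_i^*S_j)=\rho(S_jS_i^*)$. The only cosmetic difference is that the paper records the intermediate identity $\rho(S_jS_i^*)=\rho(S_i^*S_j)+Y_i^*Y_j$ via the block matrices, whereas you go directly to the compression; both are the same computation.
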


One consequence of Proposition \ref{prop:strange-map} is the following equivalence between essential normality and the approximability of certain products of elements in $\cS$.

\begin{proposition}
The $d$-tuple $S = (S_1,\ldots,S_d)$ is essentially normal if and only if $S_i^*S_j$ belongs to the closure of $\operatorname{Span}\{A_{1}A_{2}^{*}\mid A_{1},A_{2}\in\mathcal{A}_{I}\}$ for all $1 \leq i,j \leq d.$
\end{proposition}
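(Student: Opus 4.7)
The plan is to prove both directions by combining Lemma \ref{lem:agree} (or rather its consequence Lemma \ref{lem:kills-compacts}) with Proposition \ref{prop:strange-map}. Let $\mathcal{M} = \overline{\operatorname{Span}}\{A_1 A_2^* : A_1, A_2 \in \mathcal{A}_I\}$.

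The forward direction is the routine one. Assume $S$ is essentially normal, so $[S_i, S_j^*] \in \mathcal{K}$, equivalently $S_i^* S_j - S_j S_i^* \in \mathcal{K}$. In the proof of Lemma \ref{lem:kills-compacts} it was already shown that $\mathcal{K} \subseteq \mathcal{M}$. Since $S_j S_i^* \in \mathcal{M}$ trivially, adding the commutator puts $S_i^* S_j$ into $\mathcal{M}$.

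For the converse, suppose $S_i^* S_j \in \mathcal{M}$ for all $i,j$. Let $\rho : \mathcal{T}_I \to \mathcal{B}(H)$ be any singular representation. By Proposition \ref{prop:strange-map} there is a UCP map $\phi : \mathcal{T}_I \to \mathcal{B}(H)$ with $\phi|_\cS = \rho|_\cS$ and $\phi(S_i^* S_j) = \rho(S_j S_i^*)$. Since $\phi$ and $\rho$ agree on $\cS$, Lemma \ref{lem:agree} shows they agree on all of $\mathcal{M}$. In particular, $\phi(S_i^* S_j) = \rho(S_i^* S_j)$ by the assumption. Combining these two expressions for $\phi(S_i^* S_j)$ gives $\rho(S_i^* S_j - S_j S_i^*) = 0$ for every singular representation $\rho$.

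To close the argument I would use that singular representations of $\mathcal{T}_I$ are in bijection with representations of $\mathcal{O}_I = \mathcal{T}_I/\mathcal{K}$, and the direct sum of all such representations is faithful on $\mathcal{O}_I$. Hence an element that is annihilated by every singular representation lies in $\mathcal{K}$. It follows that $S_i^* S_j - S_j S_i^* \in \mathcal{K}$ for all $i,j$, and taking adjoints gives $[S_i, S_j^*] \in \mathcal{K}$, which is essential normality. The only mildly subtle point is verifying that appeal to Proposition \ref{prop:strange-map} is legitimate for every $\rho$ (it is, since that proposition was stated for an arbitrary singular representation), so no step is a genuine obstacle here — all the real work has been done in Lemmas \ref{lem:agree}, \ref{lem:ann_compact} and Proposition \ref{prop:strange-map}.
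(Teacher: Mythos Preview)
Your proof is correct and follows essentially the same approach as the paper, combining Lemma~\ref{lem:agree} with Proposition~\ref{prop:strange-map}. The paper streamlines your converse direction by applying Proposition~\ref{prop:strange-map} only to the single singular representation $q:\cT_I\to\cO_I$ (the quotient map itself), which immediately yields $q(S_jS_i^*)=\phi(S_i^*S_j)=q(S_i^*S_j)$ and avoids the separation argument over all singular representations.
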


\begin{proof}
Suppose $S_i^*S_j$ belongs to the closure of $\operatorname{span}\{A_{1}A_{2}^{*}\mid A_{1},A_{2}\in\mathcal{A}_{I}\}$ for all $1 \leq i,j \leq d.$
Let $q:\cB(\cF_I) \to \cB(\cF_I)/\cK$ denote the quotient map onto the Calkin algebra, and let $\phi$ be as in Proposition \ref{prop:strange-map}. Then by Lemma
\ref{lem:special-products} and Proposition \ref{prop:strange-map},
\[
q(S_{j}S_{i}^{*})=\phi(S_{i}^{*}S_{j})=q(S_{i}^{*}S_{j}),\quad\forall1\leq i\leq d.
\]
Hence $S_{i}^{*}S_{j}=S_{j}S_{i}^*+K$ for some compact operator $K \in \cK$ for all $i,j$, so $S$ is essentially normal. Since $\cK$ is contained in $\operatorname{span}\{A_{1}A_{2}^{*}\mid A_{1},A_{2}\in\mathcal{A}_{I}\}$ (see the proof of Lemma \ref{lem:ann_compact}), the proof of the converse is immediate.
\end{proof}

\section{Other spaces}
\label{sec:other-spaces}

\subsection{For what modules do the results hold?}
In this paper we have concentrated our attention on operator algebras arising from a certain kind of polynomial ideals, where our starting point was the Hilbert space $H^2_d$ and the row contraction $M_z = (M_{z_1}, \ldots, M_{z_d})$. The universality of this space justifies this seemingly narrow point of view. However, it is interesting to know that these results hold in a wider setting. 

Let $X$ be a compact subset of the closed unit ball $\overline{\mb{B}_d}$. Let $H$ be a Hilbert space and let $T = (T_1, \ldots, T_d)$ be a commuting row contraction. Let $C^*(T)$ and $\overline{\alg}(T)$ denote the unital C*-algebra and the unital norm closed algebra, respectively, generated by $T$. Denote by $\cK$ the compact operators on $H$. Assume the following conditions:
\begin{enumerate}
\item[(1)] $I - \sum_{i=1}^d T_i T_i^* \in \cK$, 
\item[(2)] $\cK \subset C^*(T)$, 
\item[(3)] $\Spec(\overline{\alg}(T)) \cong X$ via the map $\rho \leftrightarrow \rho(T)$.
\end{enumerate} 

Under these assumptions we have that $\Spec(C^*(T)) = X \cap \pBd$, by applying the same proof as that of Proposition \ref{prop:spectra} (here we will need the elementary fact that if $U = (u_{ij})_{i,j=1}^d$ is a unitary matrix then $\tilde{T}_i = \sum_{j=1}^d u_{ij}T_j$ is also a commuting row contraction generating $C^*(T)$). The rest of the results in Section \ref{sec:C*spectra} also follow, and in particular $\Spec(C^*(T)/ \cK) = X \cap \pBd$. 

The results leading up to Theorem \ref{thm:main}, as well as the proof of that theorem also hold in this more general setting. In particular, we obtain that the essential norm of every $A \in M_n(\overline{\alg}(T))$ is given by $\sup_{z \in X \cap \pBd}\|\hat{A}(z)\|_{M_n}$, that $\pi(\overline{\alg}(T))$ is completely isometrically isomorphic to the norm closure of the polynomials on $X \cap \pBd$, and thus its C*-envelope is isomorphic to $C(X \cap \pBd)$. 

If, in addition, we assume the following:
\begin{enumerate}
\item[(4)] $I - \sum_{i=1}^d T_i^* T_i \in \cK$ if and only if $T$ is essentially normal,
\item[(5)] $\cK \subset  \overline{\operatorname{Span}} \{A_1 A_2^* \mid A_1,A_2 \in \operatorname{Alg}(T) \}$,
\item[(6)] the restriction of the identity representation on $\overline{\alg}(T)$ to \linebreak $\operatorname{Span}\{I,T_1,T_1^*,\ldots,T_d,T_d^*\}$ has the unique extension property,
\end{enumerate}
then the results in Section \ref{sec:hyperrigidity} also hold for $T$, using nearly identical proofs. In particular, $T$ is essentially normal if and only if it is hyperrigid.

\subsection{Certain quotients of Besov-Sobolev spaces} 
For $\sigma > 0$, let $B^2_\sigma = B^2_\sigma(\Bd)$ denote the reproducing kernel Hilbert space on $\Bd$ with kernel 
\[
k^\sigma (z,w) = \frac{1}{(1 - \lip z, w \rip)^{2 \sigma}}.
\]
This scale of spaces includes the Drury-Arveson space $H^2_d$ ($\sigma = 1/2$), the Hardy space on the ball ($\sigma = d/2$) and the Bergman space on the ball ($\sigma = (d+1)/2$). It is well known (and easy to check) that this Hilbert space is the completion of the polynomials with respect to the inner product in which all monomials are orthogonal, and for which
\[
\|f\|_{B^2_\sigma}^2 = c_{\sigma,n} \|f\|^2_{H^2_d}, 
\]
for every homogeneous polynomial of degree $n$, where 
\[
c_{\sigma,n} = \frac{\Gamma(n+1)\Gamma(2\sigma)}{\Gamma(2\sigma + n)}.
\]
Below we shall denote $c_n = c_{\sigma,n}$ when $\sigma$ is held fixed. Direct computations show that if $f$ is a homogeneous polynomial of degree $n \geq 1$, then 
\be\label{eq:Besov_row_contraction}
\sum_{i=1}^d M_{z_i} M_{z_i}^* f = \frac{c_{\sigma,n}}{c_{\sigma,n-1}} f = \frac{n}{n+2\sigma-1} f,
\ee
and similarly that
\be
\label{eq:Besov_row_contraction-rev}
\sum_{i=1}^d M_{z_i}^* M_{z_i} f = \frac{n+d}{n+2\sigma}f.
\ee
Thus for $\sigma \geq 1/2$, $M_z = (M_{z_1}, \ldots, M_{z_d})$ is a row contraction satisfying
\be\label{eq:sumI}
I - \sum_{i=1}^d M_{z_i} M_{z_i}^* \in \cK
\ee 
and
\be \label{eq:sum-mk}
I - \sum_{i=1}^d M_{z_i}^* M_{z_i} \in \cK.
\ee
Moreover, a computation shows that $M_z$ is an essentially normal tuple. 

Now let us fix $\sigma$ satisfying $\sigma \geq 1/2$. Let $I$ be a homogeneous ideal in $\mb{C}[z]$ of infinite co-dimension, define $H = B^2_\sigma(\mb{B}_d) \ominus I$, and denote by $T$ the compression of $M_{z}$ to $H$. Note that in this situation, $T_i^* = M_{z_i}^*\big|_{H}$ for all $i$. As before, denote by $\oV = \overline{V(I)} = \cZ(I) \cap \overline{\Bd}$, and as usual, let $\cK$ denote the compacts on $H$. 

\begin{proposition}\label{prop:Besov_Sobolev}
The tuple $T$ is a commuting row contraction, which satisfies the following:
\begin{enumerate}
\item $I - \sum_{i=1}^d T_i T_i^* \in \cK$ and $\cK \subseteq C^*(T)$,
\item $\Spec(\overline{\alg}(T)) \cong \oV$ via the map $\rho \leftrightarrow \rho(T)$,
\item $I - \sum_{i=1}^d T_i^* T_i \in \cK$ if and only if $T$ is essentially normal.
\end{enumerate} 
If $\sigma <d/2$, then $T$ also satisfies
\begin{itemize}
\item[(4)] The restriction of the identity representation on $\overline{\alg}(T)$ to \allowbreak $\operatorname{Span}\{I,T_1,T_1^*,\ldots,T_d,T_d^*\}$ has the unique extension property.
\end{itemize}
\end{proposition}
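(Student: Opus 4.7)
The plan is to transport the arguments of Sections~\ref{sec:C*spectra}--\ref{sec:hyperrigidity} to the Besov--Sobolev setting, substituting the compactness relations (\ref{eq:sumI}) and (\ref{eq:sum-mk}) for the analogous facts used for $H^2_d$. The one structural observation I would exploit throughout is that since $I$ is $M_{z_i}$-invariant, its orthogonal complement $H$ is $M_{z_i}^*$-invariant, which yields the clean identity $T_i^* = M_{z_i}^*|_H$ and lets me transport identities on $H$ from computations performed on the ambient $B^2_\sigma$.

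For part (1), the formula $T_i^* = M_{z_i}^*|_H$ gives $I_H - \sum_i T_i T_i^* = P_H(I - \sum_i M_{z_i} M_{z_i}^*)|_H$, which is compact by (\ref{eq:sumI}). To place $\cK$ inside $C^*(T)$, I would observe that the positive compact operator $I_H - \sum_i T_i T_i^*$ acts as a scalar on each homogeneous component of $H$ (from (\ref{eq:Besov_row_contraction})), with eigenvalue $1$ attained only on the line of constants, so spectral calculus extracts the rank-one projection $P_{\mb{C}}$ onto $\mb{C}\cdot 1$ as an element of $C^*(T)$. Since polynomials modulo $I$ are dense in $H$, the constant function $1$ is cyclic for $\alg(T)$, and the operators $p(T) P_{\mb{C}} q(T)^*$ fill out a dense subset of $\cK$. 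For part (2), I would repeat the introduction's argument verbatim: for $\lambda \in \cZ(I) \cap \Bd$ the normalized reproducing kernel $k^\sigma_\lambda/\|k^\sigma_\lambda\|$ lies in $H$ and is a joint eigenvector for the $T_i^*$ with eigenvalues $\bar\lambda_i$, producing the character $\rho_\lambda$; continuity of the spectrum together with the identity $\cZ(I) \cap \overline{\Bd} = \oV$ extends this to all of $\oV$, and the converse follows because $p(T) = 0$ for $p \in I$ forces any character to evaluate at a point of $\cZ(I)$.

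For part (3), I would follow Lemma~\ref{lem:ENiffCPT} nearly verbatim; the ingredient to re-verify is that $\pi([T_i, T_i^*]) \geq 0$ in the Calkin algebra. Using essential normality of $M_z$ on $B^2_\sigma$ (which holds throughout the range $\sigma \geq 1/2$) to replace $M_{z_i} M_{z_i}^*$ by $M_{z_i}^* M_{z_i}$ modulo compacts, a short computation gives
\[
[T_i, T_i^*] \;\equiv\; P_H M_{z_i}^*(I - P_H) M_{z_i}\big|_H \pmod{\cK},
\]
which is manifestly positive. Then the hypothesis $I - \sum_i T_i^* T_i \in \cK$ combined with (1) forces $\sum_i \pi([T_i, T_i^*]) = 0$, each summand must vanish, and Fuglede's theorem delivers $[T_i, T_j^*] \in \cK$ for all $i, j$. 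The reverse implication is immediate from (1) and essential normality of $T$.

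For part (4), which I expect to be the main obstacle, I would adapt the proof of \cite[Proposition 6.4.6]{ChenGuo} to the Besov--Sobolev setting. The hypothesis $\sigma < d/2$ enters through (\ref{eq:Besov_row_contraction-rev}): it ensures $\sum_i M_{z_i}^* M_{z_i} - I$ is a strictly positive compact operator on $B^2_\sigma$, playing the role of the corresponding positivity in the Drury--Arveson case. Given any UCP extension $\phi$ of the identity on $\cS$ to $C^*(T)$, with Stinespring dilation $\pi$ on $K = H \oplus H^\perp$ and block decomposition
\[
\pi(T_i) = \begin{pmatrix} T_i & a_i \\ b_i & c_i \end{pmatrix},
\]
I would combine the row-contraction bound $\sum_i \pi(T_i)\pi(T_i)^* \leq I$ with part (1) to show $\sum_i a_i a_i^*$ is compact, and then use the column identity $\pi(\sum_i T_i^* T_i) = \sum_i \pi(T_i)^*\pi(T_i)$ together with the strict positivity above to force $a_i = 0$ and $b_i = 0$, trivializing the dilation. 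The delicate step --- and the place where I expect the main work to lie --- is verifying that the strict positivity of $\sum_i M_{z_i}^* M_{z_i} - I$ on the ambient space descends to a usable lower bound on the compression, since the natural expression for $\sum_i T_i^* T_i - I_H$ acquires a correction term reflecting the failure of $M_{z_i}$-invariance of $H$ whose sign must be controlled using the structure of the ideal.
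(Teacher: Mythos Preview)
Your approach is essentially the same as the paper's for all four parts. For (1) you reproduce exactly what the paper does (the paper also defers $\cK \subseteq C^*(T)$ to the Arveson argument you describe); for (2) your reproducing-kernel eigenvector argument is precisely the paper's explicit construction of $v_\lambda$, just phrased more succinctly; for (3) you give more detail than the paper, which simply says ``proceeding as in the proof of Lemma~\ref{lem:ENiffCPT}'' --- your computation $[T_i,T_i^*] \equiv P_H M_{z_i}^*(I-P_H)M_{z_i}|_H \pmod{\cK}$ is exactly the content of that reference.

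For (4), the paper likewise just cites \cite[Proposition~6.4.6]{ChenGuo}, so your plan to adapt that proof is the intended one. However, your sketch of the mechanism is imprecise in one place: the column identity $\pi(\sum T_i^*T_i) = \sum \pi(T_i)^*\pi(T_i)$, read in the $(1,1)$ block, yields $\phi(\sum T_i^*T_i) = \sum T_i^*T_i + \sum b_i^*b_i$, which constrains the $b_i$, not the $a_i$; the row inequality constrains the $a_i$. So ``column identity forces $a_i=0$'' is not how the argument runs. More substantively, the difficulty you honestly flag is real: the correction term $\sum_i M_{z_i}^* P_I M_{z_i}|_H$ is positive, so $\sum T_i^*T_i - I_H$ need not inherit strict positivity from the ambient space (for instance, if $I$ contains a linear form it can collapse entirely), and some care with Arveson's boundary theorem --- showing the quotient map fails to be completely isometric on $\cS$ --- is what is actually needed rather than a direct Stinespring block computation.
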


\begin{proof}
The tuple $T^*$ is the restriction of a commuting column contraction, so $T$ is a commuting row contraction. By (\ref{eq:sumI}) $I_H - \sum T_i T_i^* = P_H(I_{B^2_\sigma} - \sum M_{z_i}M_{z_i}^*)P_H \in \cK$. To see $\cK \subseteq C^*(T)$ one can argue as in \cite[Proposition 2.5]{Arv07}. This gives (1).

To see (2), observe that the map $\rho \leftrightarrow \rho(T)$ gives rise to a homeomorphism between $\Spec(\overline{\alg}(T))$ and a compact subset $X \subset \mb{C}^d$. We will show that $X = \oV$. 

We begin by observing that $H = \mb{C} \oplus H_1 \oplus H_2 \oplus \ldots$, where $H_n$ is the complement of the space of $n$th degree homogeneous polynomials in $I$ inside the space of $n$th degree homogeneous polynomials in $B^2_\sigma$. Now if $p \in I$ and $h \in H_n$, then 
\[
p(T)h = P_H p(M_z) P_H h = P_H ph = 0,
\]
because $ph \in I \perp H$. Thus $p(T) = 0$. If $\rho \in \Spec(\overline{\alg(T)})$ is associated with $\rho(T) = \lambda \in X$, then $p(\lambda) = p(\rho(T)) = \rho(p(T)) = 0$. This shows that $\lambda \in \cZ(I)$. In addition, since every character of an operator algebra is completely contractive, $|\lambda| \leq 1$. Thus $X \subseteq \oV$. 

To show that $\oV \subset X$, it suffices to show that 
\be\label{eq:point_inequality}
|p(\lambda)| \leq \|p(T)\|
\ee
for all $\lambda \in \oV$ and every polynomial $p$, because then $p(T) \mapsto p(\lambda)$ is the character associated to the point $\lambda$. Clearly, if (\ref{eq:point_inequality}) holds for all $\lambda \in V$ it holds also for all $\lambda \in \oV$. To prove (\ref{eq:point_inequality}) for $\lambda \in V$ we will explicitly exhibit the evaluation functional as a vector state. For this we use an idea that has already been used by various authors (see \cite{Arv98, MullerVasilescu,Popescu99}). It is hard to find a convenient reference that precisely fits our needs, so we give the details. 

Given $\lambda \in V$, we define 
$v_\lambda \in B^2_\sigma$ by 
\[
v_\lambda = C_\lambda\sum_{n=0}^\infty c_n^{-1} \sum_{|\alpha|=n} \frac{|\alpha|!}{\alpha!} \overline{\lambda}^\alpha z^\alpha ,
\]
where $C_\lambda$ is a normalization constant to be determined later. 
We compute 
\begin{align*}
\|v_\lambda\|^2_{B^2_\sigma} &= C_\lambda^2 \sum_{n=0} c_n^{-2} \sum_{|\alpha|=n} \left(\frac{|\alpha|!}{\alpha!}\right)^{2} |\lambda|^{2\alpha} \| z^\alpha \|^2_{B^2_\sigma} \\
&= C_\lambda^2 \sum_{n=0} c_n^{-2} \sum_{|\alpha|=n} \left(\frac{|\alpha|!}{\alpha!}\right)^{2} |\lambda|^{2\alpha} c_n \| z^\alpha \|^2_{H^2_d} \\
&= C_\lambda^2 \sum_{n=0}^\infty c_n^{-1}\sum_\alpha \frac{|\alpha|!}{\alpha!} |\lambda|^{2\alpha} \\
&= C_\lambda^2 \sum_{n=0}^\infty c_n^{-1} (|\lambda_1|^2 + \ldots + |\lambda_d|^2)^n.
\end{align*} 
Since $c_n \sim n^{2\sigma -1}$ and $|\lambda|^2 = |\lambda_1|^2 + \ldots + |\lambda_d|^2 < 1$, the right hand side converges, and we may choose $C^2_\lambda$ so that it converges to $1$. 
This shows in particular that the sum defining $v_\lambda$ converges in norm. 

We claim that $v_\lambda \in H$. Indeed, if $p = \sum_{|\beta|=n} a_\beta z^\beta \in B^2_\sigma$ is homogeneous of degree $n$, then 
\bes
\lip p, v_\lambda \rip = C_\lambda c_n^{-1} \sum_{|\alpha|=n} a_\alpha \lambda^\alpha \frac{|\alpha|!}{\alpha!} \|z^\alpha\|^2_{B^2_\sigma} = C_\lambda  p(\lambda). 
\ees
From linearity, $\lip f, v_\lambda \rip = C_\lambda f(\lambda)$ for all $f \in B^2_\sigma$ (thus $v_\lambda$ is the normalized kernel function of $B^2_\sigma$, and in particular). In particular if $p \in I$, then $\lip p, v_\lambda \rip = 0$ , so $v_\lambda \in B^2_\sigma \ominus I = H$. 

We now define a functional $\rho : \alg(T) \rightarrow \mb{C}$ by 
\[
\rho(p(T)) = \lip p(T) v_\lambda, v_\lambda \rip . 
\]
Since $\|v_\lambda\| = 1$, $\|\rho\| = 1$ and it extends to a functional on $\overline{\alg(T)}$. But for every polynomial $p$
\[
\rho(p(T)) = \lip p(M_z) v_\lambda, v_\lambda \rip = \lip pv_\lambda, v_\lambda \rip = C_\lambda p(\lambda) v_\lambda(\lambda) =  p(\lambda). 
\]

Finally, an easy computation shows that the tuple $M_z$ is essentially normal. Proceeding as in the proof of \ref{lem:ENiffCPT} and applying (\ref{eq:sum-mk}) implies (3). Assertion (4) follows as in the proof of \cite[Proposition 6.4.6]{ChenGuo} (and note that it fails for $\sigma \geq d/2$).
\end{proof}


\begin{lemma}
\label{lem:kills-compacts-2}For $\sigma > 1/2$, let $\rho:C^*(T) \to\mathcal{B}(H)$
be a singular representation, and let $\phi:C^*(T) \to\mathcal{B}(H)$
be a UCP map such that $\phi|_{\cS}=\rho|_{\cS}.$ Then $\phi$ annihilates the compact operators.
\end{lemma}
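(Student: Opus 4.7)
The strategy is to adapt the proof of Lemma \ref{lem:ann_compact} (\,=\, Lemma \ref{lem:kills-compacts}\,), but compensate for the fact that in the Besov--Sobolev setting with $\sigma > 1/2$ the projection $P_\mathbb{C}$ no longer lies in $\cN_0 := \operatorname{span}\{A_1 A_2^* : A_1, A_2 \in \overline{\alg}(T)\}$. What does lie in $\cN_0$ is the compact positive operator $K := I - \sum_i T_i T_i^*$, and the key observation that unlocks the proof is that for $\sigma > 1/2$ this $K$ is \emph{injective}. Once this is known, a Kadison--Schwarz argument propagates the vanishing of $\phi(K)$ to the vanishing of $\phi$ on all compact operators.

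First, the proofs of Lemmas \ref{lem:top-right-corner} and \ref{lem:agree} carry over verbatim to the present setting, since they depend only on $I - \sum_i T_i T_i^* \in \cK$ (Proposition \ref{prop:Besov_Sobolev}(1)) and on the singularity of $\rho$. One therefore concludes $\phi(X) = \rho(X)$ for every $X \in \overline{\cN_0}$. Since $K \in \cN_0$ and $\rho(K) = 0$ by singularity, this already yields $\phi(K) = 0$.

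Next, formula (\ref{eq:Besov_row_contraction}) together with the homogeneity of $I$ implies that $K$ is block-diagonal with respect to the decomposition $H = \bigoplus_{n \geq 0} H_n$, acting as the scalar $(2\sigma-1)/(n+2\sigma-1)$ on each finite-dimensional summand $H_n$. For $\sigma > 1/2$ each of these eigenvalues is strictly positive, so $K$ is a positive compact injective operator whose square root $K^{1/2}$ has dense range; indeed $\operatorname{ran}(K^{1/2})$ contains every $H_n$ and hence contains every finite sum of homogeneous parts.

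For the final step, the Kadison--Schwarz inequality applied to the Stinespring dilation of $\phi$ gives
\[
|\langle \phi(a^*b)h, h\rangle|^2 \leq \langle \phi(a^*a)h, h\rangle \, \langle \phi(b^*b)h, h\rangle
\]
for all $h \in H$ and $a, b \in C^*(T)$. Taking $a = K^{1/2}$ (so that $\phi(a^*a) = \phi(K) = 0$) yields $\phi(K^{1/2}b) = 0$ for every $b$; taking adjoints gives $\phi(bK^{1/2}) = 0$, and combining these one obtains $\phi(K^{1/2} b K^{1/2}) = 0$ for every $b \in C^*(T)$. Specializing $b$ to a rank-one operator $u \otimes v \in \cK \subset C^*(T)$ yields $\phi\bigl((K^{1/2}u) \otimes (K^{1/2}v)\bigr) = 0$. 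Since $K^{1/2}$ has dense range, an arbitrary rank-one operator on $H$ is a norm-limit of such operators, so the continuity and linearity of $\phi$, together with the norm-density of finite-rank operators in $\cK$, yield $\phi|_\cK = 0$. The Kadison--Schwarz manipulation is standard; the substantive new input, absent in the $\sigma = 1/2$ case, is the injectivity of $K$, which fails precisely when $K$ degenerates to $P_\mathbb{C}$ and Lemma \ref{lem:ann_compact} handles matters by a different route.
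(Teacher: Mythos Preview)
Your proof is correct, and it takes a genuinely different route from the paper's. Both arguments begin the same way: carry Lemmas~\ref{lem:top-right-corner} and~\ref{lem:agree} over to this setting to get $\phi(K)=0$ for $K=I-\sum_i T_iT_i^*$, and then exploit the fact that for $\sigma>1/2$ the eigenvalues $(2\sigma-1)/(n+2\sigma-1)$ of $K$ on the graded pieces $H_n$ are all strictly positive. The divergence is in how this positivity is used to propagate vanishing from $K$ to all of $\cK$. The paper observes $P_{H_n}\le C_n K$, hence $\phi(P_{H_n})=0$; it then passes to a minimal Stinespring dilation $\pi$ of $\phi$, splits $\pi=\mathrm{id}^{(\alpha)}\oplus\pi_s$, and uses $\phi(P_{H_n})=0$ to show $H\subset K_s$, forcing $\pi=\pi_s$ by minimality. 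You instead read the positivity as injectivity of $K$ (hence dense range of $K^{1/2}$) and run a direct $2$-positivity/Cauchy--Schwarz argument to get $\phi(K^{1/2}bK^{1/2})=0$ for all $b$, from which $\phi|_\cK=0$ follows by density. Your approach is more self-contained and elementary---it needs only $2$-positivity and avoids the representation-theoretic decomposition---while the paper's argument yields the slightly stronger structural statement that the minimal Stinespring dilation of $\phi$ is itself singular. A minor remark: your displayed inequality already follows from $2$-positivity of $\phi$ alone, so invoking the Stinespring dilation there is unnecessary (though not incorrect).
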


\begin{proof}
We will first show that for every $n \geq 0$, $\phi(P_{H_n}) = 0$.
For a homogeneous polynomial $f$ of degree $n \geq 0$, we compute
\[
\left(I - \sum_{i = 1}^d M_{z_i} M_{z_i}^*\right)f = \left(1 - \frac{n}{n+2\sigma-1}\right)f = \frac{2\sigma-1}{n+2\sigma-1}f.
\]
In particular, $I - \sum_{i = 1}^d M_{z_i} M_{z_i}^* \in \cK$, and hence 
\[
I - \sum_{i = 1}^d T_i T_i^* = P_H \left. \left(I - \sum_{i = 1}^d M_{z_i} M_{z_i}^* \right) \right|_H \in \cK.
\]
Thus, since $\rho$ is singular, $\rho (I - \sum_{i = 1}^d T_i T_i^*) = 0$.

As in the proof of Lemma \ref{lem:agree}, $\phi(T) = \rho(T)$ for every $T$ in the closure of $\operatorname{span}\{A_1A_2^* \mid A_1,A_2 \in \cA_I\}$. Hence $\phi (I - \sum_{i = 1}^d T_i T_i^*) = 0$. 

From above, for every $n \geq 0$, there is a constant $C > 0$ such that $P_{H_n} \leq C (I - \sum_{i = 1}^d T_i T_i^*)$. Thus
\[
\phi(P_{H_n}) \leq C \phi \left(I - \sum_{i = 1}^d T_i T_i^*\right) = 0,
\]
and hence $\phi(P_{H_n}) = 0$.

Let $\pi : C^*(T) \to \cB(K)$ be a minimal Stinespring dilation of $\phi$, so that
\[
\phi(T) = P_H \pi(T) |_H, \quad T \in C^*(T) . 
\]
We can decompose $\pi = \operatorname{id}^{(\alpha)} \oplus \pi_s$, where $\operatorname{id}^{(\alpha)}$ is a multiple of the identity representation and $\pi_s : C^*(T)  \to \cB(K_s)$ is a singular representation. Then we have
\[
(K_s)^\perp = \vee_n \operatorname{Ran} \pi(P_{H_n}).
\]
Since $\phi(P_{H_n}) = 0$ for each $n \geq 0$, it follows that $(K_s)^\perp \subset H^\perp$, i.e. $H \subset K_s$. By minimality, it follows that $\pi = \pi_s$, and hence that $\phi$ annihilates the compact operators.

\end{proof}

From the discussion at the beginning of the section and the last proposition we obtain the following corollary, where Lemma \ref{lem:kills-compacts-2} is used in place of Lemma \ref{lem:kills-compacts} for $\sigma > 1/2$.

\begin{corollary}\label{cor:Besov_Sobolev}
Let $\sigma \geq 1/2$. Let $I$ be a homogeneous ideal of infinite co-dimension in $\mb{C}[z]$, and let $T$ be the compression of $M_z$ to $H = B^2_\sigma \ominus I$. Let $R$ denote the image of $T$ in $C^*(T)/\cK$, and write $V = V(I)$ and $\partial V = \overline{V} \cap \partial\mb{B}_d$. Then
\begin{enumerate}
\item The tuple $R^*$ is subnormal, 
\item For every $A \in M_n(\alg(T))$, the essential norm of $A$ is equal to $\sup_{z \in \pV}\|\hat{A}(z)\|$. In particular, $\overline{\alg}(R)$ is completely isometrically isomorphic to $A(V)$,
\item The C*-envelope of $\overline{\alg}(R)$ is equal to $C(\partial V )$.
\end{enumerate}
If $\sigma < d/2$ then we also have
\begin{itemize}
\item[(4)] The tuple $T$ is essentially normal if and only if it is hyperrigid.
\end{itemize}
\end{corollary}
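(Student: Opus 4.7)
The plan is to verify that the Besov--Sobolev tuple $T$ satisfies the hypotheses of the abstract framework laid out at the start of Section \ref{sec:other-spaces}, and then to quote the resulting conclusions. Essentially all of the work has been packaged into Proposition \ref{prop:Besov_Sobolev} and Lemma \ref{lem:kills-compacts-2}; the corollary reduces to bookkeeping.

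For parts (1)--(3), Proposition \ref{prop:Besov_Sobolev}(1) verifies conditions (1) and (2) of the general framework (compactness of $I - \sum T_i T_i^*$ and $\cK \subseteq C^*(T)$), while Proposition \ref{prop:Besov_Sobolev}(2) verifies condition (3) (the identification of the character space with $\oV$). The discussion in Section \ref{sec:other-spaces} then immediately yields: $R^*$ is subnormal via Athavale's theorem applied to $R$ (which satisfies $\sum R_i R_i^* = I$ in the Calkin image); the essential norm of any matrix over $\alg(T)$ coincides with the sup norm of its Gelfand transform over $\pV$, by a verbatim repeat of the proof of Theorem \ref{thm:main} using that the Putinar spectral inclusion and Proposition \ref{prop:ess_norm_geq} transfer unchanged; and the Choquet boundary argument at the end of that proof identifies $C^*_e(\overline{\alg}(R))$ with $C(\pV)$.

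For part (4), the case $\sigma = 1/2$ is the Drury--Arveson case handled by Theorem \ref{thm:main-hr}. For $1/2 < \sigma < d/2$, the strategy is to transcribe Section \ref{sec:hyperrigidity} verbatim, with Lemma \ref{lem:kills-compacts-2} in place of Lemma \ref{lem:kills-compacts}. The extra inputs needed are supplied by Proposition \ref{prop:Besov_Sobolev}: item (3) gives the equivalence between essential normality and $I - \sum T_i^* T_i \in \cK$, which is what drives Proposition \ref{prop:sum-to-1-ess-norm}; item (4) gives the unique extension property of the identity representation on $\operatorname{Span}\{I, T_1, T_1^*, \ldots, T_d, T_d^*\}$, which is what Proposition \ref{prop:irr-rep-bdy-rep} and Theorem \ref{thm:main-hr} require at the step invoking \cite{ChenGuo}. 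With these substitutions, Propositions \ref{prop:maximal-singular-dilation}, \ref{prop:sum-to-1-ess-norm}, \ref{prop:irr-rep-bdy-rep}, and Theorem \ref{thm:main-hr} transfer without change.

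The point that really requires $\sigma > 1/2$ is Lemma \ref{lem:kills-compacts-2}, and this is the main conceptual obstacle. In the Drury--Arveson case, the rank-one projection $P_\mathbb{C}$ equals $I - \sum M_{z_i} M_{z_i}^*$ exactly, so it lies in $\overline{\alg(M_z)\alg(M_z)^*}$ and one bootstraps to all of $\cK$. For $\sigma > 1/2$, the operator $I - \sum M_{z_i} M_{z_i}^*$ is no longer a projection but acts on each homogeneous summand $H_n$ as multiplication by $(2\sigma-1)/(n+2\sigma-1) > 0$, so every $P_{H_n}$ is dominated by a constant multiple of it. From there one deduces that any UCP map agreeing with a singular representation on $\cS$ must annihilate each $P_{H_n}$, and then (via a minimal Stinespring dilation) the whole ideal of compacts; this is exactly Lemma \ref{lem:kills-compacts-2}. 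The complementary restriction $\sigma < d/2$ reflects that the analogue of \cite[Proposition 6.4.6]{ChenGuo} providing irreducibility and the unique extension property for the identity representation on $\cS$ breaks down once $\sigma$ becomes too large.
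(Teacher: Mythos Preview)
Your proposal is correct and follows essentially the same approach as the paper: verify the abstract hypotheses (1)--(6) of Section~\ref{sec:other-spaces} via Proposition~\ref{prop:Besov_Sobolev}, and for $\sigma>1/2$ substitute Lemma~\ref{lem:kills-compacts-2} for Lemma~\ref{lem:kills-compacts}. The paper's own proof is a one-line pointer to exactly these ingredients; you have simply unpacked the bookkeeping in more detail.
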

\begin{remark}
Note that Corollary \ref{cor:Besov_Sobolev} {\em would not} follow directly from Arveson's conjecture and universality of the $d$-shift, because $T$ is a row contraction of infinite rank. 
\end{remark}



\section{Essential von Neumann inequality on subvarieties}

Recall that a commuting row contraction $T = (T_1, \ldots, T_d)$ is said to be of {\em finite rank} if $\rank(I - \sum_{i=1}^d T_iT_i^* ) < \infty$. 

\begin{theorem}\label{thm:ess_vN}
Let $T = (T_1, \ldots, T_d)$ be a commuting row contraction of finite rank. Assume that $I \triangleleft \mb{C}[z]$ is a homogeneous ideal such that $q(T) = 0$ for all $q \in I$. Then for every matrix valued polynomial $p \in M_n(\mb{C}[z])$
\[
\|p(T)\|_e \leq \sup_{z \in \pV(I)}\|p(z)\|_{M_n} . 
\]
\end{theorem}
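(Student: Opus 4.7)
The plan is to decompose $T$ into a pure part $T_p$ and a spherical part $T_c$ via a Wold-type decomposition and bound each essential norm separately. Form the CP map $\Phi(X) = \sum_i T_i X T_i^*$ and let $Q_\infty = \lim_n \Phi^n(I)$ (SOT), which satisfies $\Phi(Q_\infty)=Q_\infty$. For commuting row contractions $Q_\infty$ is a projection and $H_c := Q_\infty H$ reduces $T$ (a Popescu--Arveson type fact), giving $T = T_p \oplus T_c$ on $H = H_p \oplus H_c$ with $T_p$ pure and $T_c$ spherical ($\sum_i (T_c)_i(T_c)_i^* = I_{H_c}$). Both pieces inherit $q(T_p) = 0 = q(T_c)$ for $q\in I$ because the decomposition is reducing, and $D_{T_c} = 0$ forces $\rank(D_{T_p}) = \rank(D_T) =: r < \infty$. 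Arveson's dilation theorem then realizes $T_p$ as the compression of $M_z \otimes I_{\mathbb{C}^r}$ on $H^2_d \otimes \mathbb{C}^r$ to a coinvariant subspace $L$. The relation $q(T_p) = P_L(q(M_z)\otimes I_r)|_L = 0$, combined with coinvariance of $L$, forces $(q(M_z)^* \otimes I_r)L \subseteq L \cap L^\perp = \{0\}$, placing each coordinate of every $\xi \in L$ in $\bigcap_{q\in I}\ker q(M_z)^* = \cF_I$. Thus $L \subseteq \cF_I \otimes \mathbb{C}^r$ and $T_p = P_L(S \otimes I_r)P_L$. Since compressions of compacts are compact (so the compression induces a UCP map between Calkin algebras) and since $A \mapsto A \otimes I_r$ gives an isometric $*$-embedding of Calkin algebras,
\[
\|p(T_p)\|_{M_n, e} \;\leq\; \|p(S) \otimes I_r\|_{M_n, e} \;=\; \|p(S)\|_{M_n, e} \;=\; \sup_{z\in\pV}\|p(z)\|_{M_n},
\]
the final equality being Theorem \ref{thm:main}(2).

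For the spherical part, Athavale's theorem shows that $T_c^*$ is subnormal, with minimal normal extension $N$ whose joint spectrum is contained in $\partial\Bd$. The ideal vanishing $q(T_c) = 0$ places $\sigma(N) \subseteq \cZ(I) \cap \partial\Bd = \pV$ by the standing assumption (\ref{eq:stand_assum}), and Corollary \ref{cor:c-star-env-comm-tuple} together with the subnormal-extension argument from the proof of Theorem \ref{thm:main} yields
\[
\|p(T_c)\|_{M_n} \;\leq\; \sup_{z\in\sigma(N)}\|p(z)\|_{M_n} \;\leq\; \sup_{z\in\pV}\|p(z)\|_{M_n},
\]
which a fortiori dominates $\|p(T_c)\|_{M_n,e}$. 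Since $\|A \oplus B\|_e \leq \max\{\|A\|_e, \|B\|_e\}$ for direct sums, combining the two bounds gives the stated inequality.

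The main obstacle I anticipate lies in the first step: while $Q_\infty$ exists as a positive fixed point of $\Phi$ in full generality, proving that it is a projection whose range reduces $T$ in the commuting tuple setting is subtler than in the single-variable case and requires care with the Popescu--Arveson theory. The finite-rank hypothesis is crucial here, as it is precisely what brings Arveson's finite-rank dilation theorem into reach for $T_p$. A secondary but delicate point, treated above, is the verification that the ideal relations force the coinvariant dilation subspace $L$ into $\cF_I \otimes \mathbb{C}^r$, which is what permits replacing $M_z \otimes I_r$ by the quotient shift $S \otimes I_r$ and invoking Theorem \ref{thm:main} directly.
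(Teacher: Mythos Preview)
Your Wold-type decomposition is a genuine gap, not merely a subtlety to be filled in: the claim that $Q_\infty$ is a projection whose range reduces $T$ is false even under the finite-rank hypothesis. For a counterexample in $d=1$ (which lifts to $d \geq 2$ by setting $T_2 = \cdots = T_d = 0$), let $T$ be the weighted backward shift $Te_n = a_n e_{n-1}$, $Te_0 = 0$, on $\ell^2(\mb{N})$ with distinct weights $a_1, \ldots, a_N \in (0,1)$ and $a_j = 1$ for $j > N$. Then $\rank(I - TT^*) = N < \infty$, yet $Q_\infty$ is the diagonal operator with entries $c_n = \prod_{j>n}|a_j|^2$, so that $c_n = 1$ for $n \geq N$ but $0 < c_n < 1$ for $n < N$; hence $Q_\infty$ is not a projection. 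In fact this $T$ is irreducible (one checks $\cK \subseteq C^*(T)$), so it admits \emph{no} nontrivial reducing decomposition whatsoever, despite being neither pure nor spherical. Your separate treatments of the pure and spherical summands are each sound in isolation, but in general there are no such summands to apply them to.

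The paper bypasses this by never decomposing $T$. It invokes Popescu's constrained dilation theorem to coextend $T$ directly to $(S \otimes I_G) \oplus \sigma(S)$ on a larger space, with $\dim G = r$ and $\sigma$ a $*$-representation of $\cT_I$ annihilating the compacts; the bound then follows from Theorem~\ref{thm:main} in one line, since compression does not increase essential norms and $\|\sigma(p(S))\| \leq \|p(Z)\| = \|p(S)\|_e$. Your observation that a coinvariant subspace for $M_z \otimes I_r$ on which the relations in $I$ hold must lie in $\cF_I \otimes \mb{C}^r$ is correct, and is morally one ingredient packaged inside Popescu's theorem; the essential point is that the splitting into a shift-type piece and a boundary piece occurs on the \emph{dilation} space, not on $H$ itself.
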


It is worth mentioning that the above theorem is interesting even when $T$ satisfies no relation; the supremum on the right hand side is then taken over the entire unit sphere. In that case the result is an application of the results of \cite{Arv98}, along the lines of the following proof. 

\begin{proof}
We continue with the notation set in the introduction. 
Denote $r = \rank (I - \sum_{i=1}^d T_i T_i^*)$. Popescu's constrained dilation theorem \cite[Theorem 2.4]{Popescu06} provides the following:
\begin{enumerate}
\item an $r$-dimensional Hilbert space $G$,
\item a $*$-representation $\sigma$ of $\cT_I$ on some other Hilbert space $H$ which annihilates the compacts,
\item a subspace $L \subseteq (H^2_d \otimes G) \oplus H$ that is invariant under the the operators $(S_i \otimes I_G)^* \oplus \sigma(S_i)^*$, $i=1, \ldots, d$,
\end{enumerate}
such that $T$ is equal to the compression of $(S_i \otimes I_G) \oplus \sigma(S_i)$ to $L$. Since $\sigma$ annihilates the compacts it induces a $*$-representation $\tilde{\sigma}$ of $\cO_I$, thus for every $p \in M_n(\mb{C}[z])$,  $p(T)$ is equal to a compression of 
\[
(p(S) \otimes I_G) \oplus \tilde{\sigma}(p(Z)) \in M_n(B(H^2_d \otimes G \oplus H))
\]
to $M_n(L)$. By Theorem \ref{thm:main}
\[
\|(p(S) \otimes I_G) \oplus \tilde{\sigma}(p(Z))\|_e = \|p(S)\|_e = \sup_{z \in \pV(I)}\|p(z)\|_{M_n}. 
\]
Since compression cannot increase the essential norm, the proof is complete. 
\end{proof}

Let us finish by mentioning a non-trivial class of examples to which the above theorem can be applied. 
\begin{example}
Theorem \ref{thm:ess_vN} provides an alternative route for proving Corollary \ref{cor:Besov_Sobolev}. Indeed, although the row contractions considered there do not have finite rank, they can be compactly perturbed (in a way that does not affect homogeneous relations) to have finite rank. In fact, Equation (\ref{eq:Besov_row_contraction}) shows that this is true also for $\sigma<1/2$.  It is worth mentioning that even for $\sigma \geq (d+1)/2$, when $B^2_\sigma$ is a Bergman type space, the norm of the compression of $p(M_z)$ to $B^2_\sigma \ominus I$ can be strictly bigger than $\|p\|_V$, thus the estimate obtained for the essential norm is not trivial. 
\end{example}

\bibliographystyle{amsplain}

\end{document}